\documentclass[12pt]{article}

\usepackage[margin=0.92in]{geometry}
\usepackage{amsmath,amssymb,amsthm,mathtools}
\usepackage{microtype}
\usepackage{booktabs}
\usepackage[hidelinks]{hyperref}
\usepackage{enumitem}

\allowdisplaybreaks
\newtheorem{theorem}{Theorem}[section]
\newtheorem{lemma}[theorem]{Lemma}
\newtheorem{proposition}[theorem]{Proposition}
\newtheorem{corollary}[theorem]{Corollary}
\theoremstyle{definition}

\newcommand{\crn}{\operatorname{cr}}
\newcommand{\ZZ}{\mathrm{Z}}

\newcommand{\ceil}[1]{\left\lceil #1\right\rceil}
\newcommand{\floor}[1]{\left\lfloor #1\right\rfloor}

\title{A Proof of Albertson's Conjecture\\[3mm]}  
\author{
Ankan Sadhu\\[1.5mm]
{\small Department of Computer Science and Engineering}\\[-1mm]
{\small Government College of Engineering and Ceramic Technology}\\[-1mm]
{\small Kolkata, India}\\[-1mm]
{\small\texttt{ankan-sadhu-br24-3003@gcect.ac.in}}
}
\date{}

\begin{document}
\maketitle

\begin{abstract}
Albertson conjectured that every graph of chromatic number $r$ has crossing number at least that of $K_r$. We prove the conjecture for every $r$. After the known case $r\le 18$, an $r$-critical counterexample is reduced to two order ranges. Near $r$ we use Gallai's decomposition, completion, and a reserved weak-immersion routing argument. In the remaining middle range we compress repeated independent-triple reductions into an exact terminal edge bound and combine it with sampled crossing-number inequalities. The remaining finite and interval inequalities are verified by exact certificates.
\end{abstract}

\noindent\textit{2020 Mathematics Subject Classification:} 05C10, 05C15.

\noindent\textit{Keywords:} crossing number, chromatic number, colour-critical graph, Albertson's Conjecture.

\section{Introduction}

The crossing number $\crn(G)$ of a graph $G$ is the minimum number of crossings in a drawing of $G$ in the plane. A graph is $r$-critical if it has chromatic number $r$ and every proper subgraph has smaller chromatic number. Every $r$-chromatic graph contains an $r$-critical subgraph, and crossing number is monotone under taking subgraphs.

Albertson's conjecture is the following.

\begin{theorem}[Albertson's Conjecture]\label{thm:main}
If $\chi(G)\ge r$, then
\[
   \crn(G)\ge \crn(K_r).
\]
\end{theorem}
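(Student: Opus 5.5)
We argue by contradiction and take a minimal counterexample. Since $\crn$ is monotone under subgraphs and every $r$-chromatic graph contains an $r$-critical subgraph, it suffices to treat an $r$-critical graph $G$ with $\crn(G)<\crn(K_r)$. Here $r$ is the least value for which such a graph exists, and $n=|V(G)|$ is minimal among counterexamples for that $r$. The known cases $r\le 18$ (Albertson--Cranston--Fox for $r\le 12$, extended by later work to $r\le 18$) let us assume $r\ge 19$. We then use the standard upper bound $\crn(K_r)\le Z(r)=\frac14\floor{\frac r2}\floor{\frac{r-1}2}\floor{\frac{r-2}2}\floor{\frac{r-3}2}$.

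Against this upper bound we set the lower bounds on $\crn(G)$ that come from criticality. Every vertex has degree at least $r-1$, so $|E(G)|\ge (r-1)n/2$. Gallai's and Kostochka--Yancey's bounds strengthen this to roughly $(\frac r2-\frac1{r-1})n$ once $n\ge r+2$. We feed these edge counts into the crossing lemma and its sampled version, obtained by averaging $\crn(H)\ge |E(H)|-3|V(H)|+6$ over random induced subgraphs. This gives $\crn(G)\ge c\,|E|^3/n^2$, which exceeds $Z(r)\approx r^4/64$ once $n$ is a suitable constant multiple of $r$ or larger. Sharper forms of this bound, using independent-set reductions (deleting independent triples keeps criticality-type degree bounds), should push the threshold down to $n\le r+c_0$ for some moderate $c_0$. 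Below that threshold, and specifically near $n=r$, we split into two ranges.

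\textbf{Near range} ($r+1\le n\le r+c_0$; the case $n=r$ is $K_r$ itself). By Gallai, the complement $\overline G$ is connected when $n\le 2r-2$, and a critical graph with a disconnected complement decomposes as a join of smaller critical graphs. For a join, induction on the factors together with a direct counting of crossings between the factors gives the bound. If $\overline G$ is connected, the few non-adjacencies form a sparse structure, since $n-r$ is small and each vertex misses at most $n-r$ neighbours. We then show that $G$ contains a weak immersion of $K_r$ in which every missing edge is routed along a path through a reserved vertex outside the chosen $r$ branch vertices, with each edge of $G$ used at most once. Any drawing of $G$ then induces a drawing of $K_r$ whose crossings inject into those of $G$, which forces $\crn(G)\ge\crn(K_r)$. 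This routing is a Hall-type matching argument between missing pairs and the $n-r$ spare vertices, using their high degree.

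\textbf{Middle range}, between $r+c_0$ and the crossing-lemma threshold. Here we iterate independent-triple reductions, each lowering $n$ by $3$ and $\chi$ by at most $2$. We track an exact terminal edge bound and compare it to $Z(r)$ through the sampled crossing inequality, optimizing the sampling probability. What remains is a family of polynomial inequalities in $(r,n)$, which we verify symbolically on intervals and by finite check for small parameters.

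The main obstacle is the near range. Constructing an edge-disjoint routing for all missing edges simultaneously, without reusing edges and while keeping the crossing counting injective, is delicate, and that is where I would concentrate effort. I would first try greedy routing through spare vertices of maximal degree.
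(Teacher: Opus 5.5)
Your near-range argument breaks at the claim that an edge-disjoint routing of the missing pairs gives a drawing of $K_r$ whose crossings inject into those of $G$. That holds only for a subdivision, i.e.\ internally vertex-disjoint paths. For an immersion it fails. If two routing paths $u\,p\,u'$ and $v\,p\,v'$ share the internal vertex $p$ and their edge-ends alternate in the rotation at $p$, separating them creates a crossing that is absent from the drawing of $G$, since adjacent edges do not cross in a good drawing. A $K_r$-subdivision is only available for $n\le r+5$ (Lemma~\ref{lem:small-excess}). Beyond that, an immersion gives only $\crn(G)\ge\crn(K_r)-L$. The paper bounds this smoothing loss by $L(r,c)=c(r+c)(3r+c)/8$ with $c=n-r$.

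The missing idea is how to pay for that loss. The paper runs the Fox--Pach--Suk routing (Gallai factors, Shannon edge-colouring, and one common pool of singleton routing vertices) so that a copy of $K_{c,b}$, with $b=r-(c+1)-\floor{3(c+1)/2}$, is left completely unused. Its crossings are disjoint from those of the immersion and satisfy $\crn(K_{c,b})\ge L(r,c)$ for $10\le c<57r/250$ and $r\ge1000$. For $6\le c\le9$ this compensation is too weak, so the paper uses completion to $K_{r+c}$ together with $\crn(K_{r+c})\ge\binom{r+c}{4}\crn(K_r)/\binom r4$ and the averaged Brosch--Polak bound. The range $19\le r\le999$ needs exact certificates. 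Greedy routing through high-degree spare vertices addresses none of this.

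The other steps also have gaps.
\begin{itemize}
\item You state Gallai's theorem backwards: $\overline G$ is \emph{disconnected} when $n\le2r-2$. So the join case is the whole near range, not a side case.
\item Your ``induction on the factors plus counting crossings between factors'' cannot work. $\crn(K_r)$ is unknown for $r\ge13$, so any argument must either exhibit $K_r$ topologically (subdivision or immersion) or prove the stronger bound $\crn(G)\ge\ZZ(r)$. Minimality of the factors gives neither.
\item Nothing supports your expectation that independent-triple reductions push the near range down to $n\le r+c_0$ with $c_0$ constant. After Cranston's reduction the near range is $n<307r/250$, so the excess is linear in $r$, which is exactly why the loss accounting above is unavoidable.
\item The paper's middle-range method only reaches $n\ge221r/125$. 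The band in between is Cranston's Theorem~4, which your plan would have to reprove.
\item Even in the middle range, the compressed descent alone is not strong enough. The paper iterates clique caps, sampling around a $q$-clique against $C_q$, to bound $\omega(G)$. That bound is what makes the exact terminal edge bound effective.
\item Deleting an independent triple from a critical graph lowers $\chi$ by exactly one, and that is what the edge bounds require. ``At most 2'' would only give $\chi\ge r-2$.
\end{itemize}
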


The literature establishes the conjecture for $r\le18$, by Ackerman~\cite{Ackerman}, following Albertson--Cranston--Fox~\cite{ACF} and Bar\'at--T\'oth~\cite{BaratToth}. Recent preprints~\cite{Cranston,Sadhu26,CaoMehat} report finite-range extensions beyond $r\le18$; the present proof is independent of those finite-range verifications, although it uses Cranston's order reduction below.

The proof uses the order reduction of Cranston~\cite{Cranston}. For an $r$-critical counterexample it is enough to consider
\[
 n<\frac{307}{250}r
 \qquad\text{or}\qquad
 \frac{221}{125}r\le n\le \frac{141}{50}r,
 \tag{1}\label{eq:two-ranges}
\]
where $n=|V(G)|$. The first range is treated in Section~\ref{sec:near}. The second is treated in Section~\ref{sec:middle}. The common edge bound used in the middle range is proved in Section~\ref{sec:compression}.

\medskip
\noindent\textbf{Verification.}

The exact certificates used in Propositions~\ref{prop:finite-near}, \ref{prop:finite-middle}, and \ref{prop:tail} are available at

$$
\text{\url{https://github.com/Alphoenixx/albertson-conjecture-verification}}.
$$

An archived snapshot is also available on Zenodo at

$$
\text{\href{https://doi.org/10.5281/zenodo.22719759}{doi:10.5281/zenodo.22719759}}.
$$

Running \texttt{python verify.py} checks the certificates using exact integer and rational arithmetic; no solver or floating-point comparison is used.

\section{Preliminaries}\label{sec:prelim}

Throughout, $G$ is an $r$-critical graph of order $n$ and size $m$. A drawing is called \emph{good} if no edge crosses itself, adjacent edges do not cross, any two edges cross at most once, and no three edges cross at one interior point. A crossing-minimal drawing may always be taken good. In particular, every crossing has four distinct endpoints, and any fixed four vertices support at most three crossings. Put
\[
\ZZ(r):=\frac14
\floor{\frac r2}
\floor{\frac{r-1}{2}}
\floor{\frac{r-2}{2}}
\floor{\frac{r-3}{2}}.
\]
The standard two-circle drawing of $K_r$ has $\ZZ(r)$ crossings, so
\[
\crn(K_r)\le \ZZ(r).
\]
Thus the numerical parts of the proof may establish the stronger inequality
\[
\crn(G)\ge \ZZ(r),
\tag{2}\label{eq:targetZ}
\]
since then $\crn(G)\ge\ZZ(r)\ge\crn(K_r)$.  The small-excess argument below instead proves Albertson's inequality directly by producing a subdivision of $K_r$.

\begin{lemma}[Counterexample consequences]\label{lem:counter-facts}
If $G$ is an $r$-critical counterexample to Albertson's conjecture, then
\[
\delta(G)\ge r-1,
\qquad
\crn(G)\le \ZZ(r)-1,
\tag{3}\label{eq:counter}
\]
and $G$ contains no subdivision of $K_r$.
\end{lemma}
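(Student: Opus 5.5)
We treat the three assertions of Lemma~\ref{lem:counter-facts} separately; each follows from a standard fact plus the definition of a counterexample. Here an $r$-critical counterexample means an $r$-critical graph $G$ with $\crn(G)<\crn(K_r)$. This is enough for the whole theorem: if some $G$ with $\chi(G)\ge r$ violated Theorem~\ref{thm:main}, then an $r$-critical subgraph $H$ would satisfy $\crn(H)\le\crn(G)<\crn(K_r)$, by monotonicity of crossing number under subgraphs.

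\emph{Minimum degree.} Suppose some vertex $v$ has $\deg(v)\le r-2$. By criticality, $G-v$ has an $(r-1)$-colouring. The at most $r-2$ neighbours of $v$ use at most $r-2$ colours, so some colour is free for $v$. This gives an $(r-1)$-colouring of $G$, contradicting $\chi(G)=r$. Hence $\delta(G)\ge r-1$.

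\emph{Crossing bound.} From the counterexample hypothesis and the two-circle drawing bound $\crn(K_r)\le\ZZ(r)$ recorded above, we get $\crn(G)<\crn(K_r)\le \ZZ(r)$. Since $\crn(G)$ is an integer, it remains to check that $\ZZ(r)$ is an integer; then $\crn(G)\le \ZZ(r)-1$ follows. This integrality check is the only step needing any computation. Among the four consecutive integers $\lfloor r/2\rfloor,\dots,\lfloor (r-3)/2\rfloor$, the values come in two equal pairs: $\lfloor r/2\rfloor=\lfloor (r-1)/2\rfloor$ or $\lfloor (r-1)/2\rfloor=\lfloor (r-2)/2\rfloor$, depending on the parity of $r$. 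Write the product as $a\cdot a\cdot(a-1)(a-1)$ in the first case and $a(a-1)(a-1)(a-2)$ in the second. In the first case $a^2(a-1)^2=(a(a-1))^2$, and $a(a-1)$ is even, so the product is divisible by $4$. In the second case $a(a-1)(a-2)$ contains an even factor, and the remaining factor $(a-1)$ together with $a(a-2)$ supplies a second factor of $2$: if $a-1$ is even, then $a-1$ is one even factor and $a(a-2)$ is odd times odd, so we should check this more carefully. In that subcase $a-1$ even means $(a-1)^2$ is divisible by $4$. If instead $a$ is even, then $a$ and $a-2$ are both even. So the product is divisible by $4$ in every case.

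\emph{No subdivision of $K_r$.} A subdivision $H$ of $K_r$ satisfies $\crn(H)=\crn(K_r)$. Indeed, any good drawing of $H$ yields a drawing of $K_r$ with the same crossings, by suppressing the degree-two vertices, and conversely. Crossing number is monotone under subgraphs, so $G\supseteq H$ would give $\crn(G)\ge\crn(K_r)$, contradicting the counterexample hypothesis.

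There is no real obstacle here. The only point needing care is the suppression argument in the last step. A drawing of $H$ is a drawing of $K_r$ in which each edge of $K_r$ is drawn as a path. Suppressing the subdivision vertices may create self-crossing edges or pairs of edges crossing more than once, but it does not increase the number of crossings. The standard redrawing that makes a drawing good only lowers the crossing count. Therefore $\crn(K_r)\le\crn(H)$, and this inequality is all the last step needs.
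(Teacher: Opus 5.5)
Your proposal is correct and follows the paper's proof: criticality gives $\delta(G)\ge r-1$, integrality gives $\crn(G)\le\ZZ(r)-1$, and subdivision invariance with monotonicity rules out a $K_r$-subdivision. Your parity check on $\ZZ(r)$ is valid but not needed, since $\crn(G)<\crn(K_r)$ with both sides integers already gives $\crn(G)\le\crn(K_r)-1\le\ZZ(r)-1$.
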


\begin{proof}
Criticality gives $\delta(G)\ge r-1$.  Since $G$ is a counterexample,
\[
\crn(G)<\crn(K_r)\le\ZZ(r),
\]
and crossing numbers are integers, giving the second assertion in \eqref{eq:counter}.  If $G$ contained a subdivision $S$ of $K_r$, then subdivision invariance and monotonicity under taking subgraphs would give
\[
\crn(G)\ge\crn(S)=\crn(K_r),
\]
a contradiction.
\end{proof}

Whenever we argue from a counterexample, we use these three consequences without further comment.  A real lower bound strictly larger than $\ZZ(r)-1$ therefore implies $\crn(G)\ge\ZZ(r)$ by integrality.

\subsection{Crossing inequalities}

We use the following global bounds of B\"ungener and Kaufmann~\cite{BK}.  The second and third inequalities below are their Theorem~4(a) and Theorem~4(b), respectively; the first is the slightly weaker estimate derived explicitly in the proof of Theorem~4(a).

\begin{lemma}[B\"ungener--Kaufmann]\label{lem:BK}
For every graph $H$ with $N>2$ vertices and $M$ edges,
\begin{align}
\crn(H)&\ge 4M-\frac{50}{3}(N-2), \tag{4}\label{eq:BK4}\\
\crn(H)&\ge \frac{37}{9}M-\frac{155}{9}(N-2), \tag{5}\label{eq:BK37}\\
\crn(H)&\ge 5M-\frac{203}{9}(N-2). \tag{6}\label{eq:BK5}
\end{align}
\end{lemma}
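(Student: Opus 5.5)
The lemma is imported rather than proved afresh. Inequalities \eqref{eq:BK37} and \eqref{eq:BK5} are, verbatim, Theorem~4(a) and Theorem~4(b) of B\"ungener and Kaufmann~\cite{BK}. The plan is therefore to quote those two statements and to check that their hypotheses are satisfied: $H$ is an arbitrary simple graph with $N>2$ vertices, and $\crn(H)$ is the ordinary crossing number, which is realised by a good drawing as in Section~\ref{sec:prelim}. Both points are immediate. The only item needing comment is \eqref{eq:BK4}, which is not stated as a theorem in~\cite{BK} and must be extracted from inside the proof of their Theorem~4(a).

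To justify \eqref{eq:BK4}, I would recall the mechanism behind all three bounds, namely the iterated edge-removal argument.

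\emph{The removal step.} Let $e_k(N)$ denote an upper bound on the number of edges of a $k$-planar simple graph on $N$ vertices. Fix a crossing-minimal good drawing of $H$. If the current graph has more than $e_k(N)$ edges, it is not $k$-planar, so some edge carries at least $k+1$ crossings. Deleting that edge lowers the crossing count by at least $k+1$.

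\emph{The accumulation.} Running this process with thresholds $k=K,K-1,\dots,0$ shows that $\crn(H)$ is at least a nonnegative combination of terms $M-e_k(N)$. The combination is chosen to produce the desired coefficient on $M$.

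\emph{The inputs.} The density bounds used are $e_0=3(N-2)$, $e_1=4(N-2)$, $e_2=5(N-2)$, $e_3=5.5(N-2)$, $e_4=6(N-2)$, together with the refined estimates of~\cite{BK} for larger $k$. With the right weights these yield \eqref{eq:BK37} and \eqref{eq:BK5}.

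The proof of Theorem~4(a) passes through an intermediate linear estimate with slope $4$ before the final optimisation to slope $37/9$. That intermediate estimate is exactly $4M-\tfrac{50}{3}(N-2)$. Concretely, I would locate that line in~\cite{BK} and cite it. I would also note that it is weaker than \eqref{eq:BK37} only in the sense that neither bound dominates the other for all $(N,M)$; it is still a valid lower bound for every graph with $N>2$.

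The main obstacle is confirming the exact constant $50/3$. The naive sum $3+4+5+5.5=17.5$ of the first four density coefficients gives only $4M-17.5(N-2)$. The improvement to $50/3$ depends on the sharper charging argument of~\cite{BK}, in which edges removed at different thresholds are weighted unequally, rather than on the plain telescoping described above. I would therefore not attempt to re-derive this constant independently. Instead, I would check that the derivation in~\cite{BK} uses only simple graphs and good drawings, so that it applies to every $H$ arising later in the paper: subgraphs of $G$ and sampled induced subgraphs. I would then record \eqref{eq:BK4} with a pointer to the relevant step of that proof.
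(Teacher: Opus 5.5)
Your proposal matches the paper's treatment: both import the lemma, identifying \eqref{eq:BK37} and \eqref{eq:BK5} with Theorem~4(a) and~4(b) of~\cite{BK} and taking \eqref{eq:BK4} as the intermediate estimate inside the proof of Theorem~4(a), so the citation is the whole proof. Your side checks are correct but not needed: the naive telescoping gives $4M-17.5(N-2)$, and the difference \eqref{eq:BK37} minus \eqref{eq:BK4} equals $(M-5(N-2))/9$, so neither bound dominates the other.
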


Thus \eqref{eq:BK37} and \eqref{eq:BK5} are exactly Theorem~4(a) and (b) of~\cite{BK}, while \eqref{eq:BK4} is the shorter intermediate bound proved in their proof of Theorem~4(a). Since crossing numbers are integers, we also use the equivalent rounded forms, for example
\begin{equation}
\crn(H)\ge 5M-\floor{\frac{203}{9}(N-2)}.
\tag{6a}\label{eq:BK5-int}
\end{equation}

We shall repeatedly sample induced subgraphs.

\begin{lemma}[Induced-subgraph averaging]\label{lem:sampling}
Suppose every graph $H$ on $s$ vertices satisfies
\[
\crn(H)\ge a|E(H)|-b(s-2).
\]
If $G$ has $n\ge s\ge4$ vertices and $m$ edges, then
\[
\crn(G)\ge
 a m\frac{(n-2)(n-3)}{(s-2)(s-3)}
 -b\frac{n(n-1)(n-2)(n-3)}{s(s-1)(s-3)}.
\tag{7}\label{eq:sample}
\]
\end{lemma}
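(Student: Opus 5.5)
Fix a crossing-minimal good drawing $D$ of $G$ with $\crn(G)$ crossings, and sample induced subdrawings. For each $s$-subset $S\subseteq V(G)$, the drawing $D$ restricted to $G[S]$ is a drawing of $G[S]$, so its number of crossings is at least $\crn(G[S])$. By hypothesis,
\[
\mathrm{cr}_D(G[S]) \ge \crn(G[S]) \ge a|E(G[S])| - b(s-2).
\]
The plan is to sum this over all $\binom{n}{s}$ subsets and count each term by double counting.

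For the edge term, an edge survives in $G[S]$ exactly when both endpoints lie in $S$. Hence
\[
\sum_S |E(G[S])| = m\binom{n-2}{s-2}.
\]
For the crossing term, a crossing of $D$ involves two edges. Because $D$ is good, these edges have four distinct endpoints, and the crossing survives in $D[S]$ exactly when all four endpoints lie in $S$. This needs $s\ge4$. Hence
\[
\sum_S \mathrm{cr}_D(G[S]) = \crn(G)\binom{n-4}{s-4}.
\]
The constant term contributes $b(s-2)\binom{n}{s}$. Combining the three counts gives
\[
\crn(G)\binom{n-4}{s-4} \ge a m\binom{n-2}{s-2} - b(s-2)\binom{n}{s}.
\]

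Next divide by $\binom{n-4}{s-4}$ and simplify the binomial ratios. First,
\[
\binom{n-2}{s-2}\Big/\binom{n-4}{s-4} = \frac{(n-2)(n-3)}{(s-2)(s-3)}.
\]
Second,
\[
\binom{n}{s}\Big/\binom{n-4}{s-4} = \frac{n(n-1)(n-2)(n-3)}{s(s-1)(s-2)(s-3)}.
\]
Multiplying the second ratio by $(s-2)$ cancels one factor in the denominator. This yields exactly \eqref{eq:sample}.

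The only delicate point is the crossing count, which relies on two facts. Good drawings have crossings with four distinct endpoints, which is why the exponent is $s-4$. Restricting $D$ never creates crossings. No step is a real obstacle beyond checking that $s\ge4$ makes $\binom{n-4}{s-4}$ positive, so that the division is legitimate.
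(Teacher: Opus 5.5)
Your proposal is correct and follows the paper's own proof: you fix a crossing-minimal good drawing, sum the hypothesis over all induced $s$-vertex subdrawings, count each edge $\binom{n-2}{s-2}$ times and each crossing $\binom{n-4}{s-4}$ times, and divide by $\binom{n-4}{s-4}$. Your binomial simplifications, including the cancellation of the factor $s-2$ in the constant term, are correct.
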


\begin{proof}
Take a crossing-minimal good drawing of $G$ and sum over all induced $s$-vertex subdrawings. Each edge occurs in $\binom{n-2}{s-2}$ samples and each crossing in $\binom{n-4}{s-4}$ samples. Summing the assumed bound and dividing by $\binom{n-4}{s-4}$ gives \eqref{eq:sample}.
\end{proof}

\subsection{Critical-graph edge bounds}

We use three standard lower bounds. The first is the Kostochka--Yancey bound~\cite{KostochkaYancey}. The second is Gallai's small-order bound~\cite{Gallai}. The third is the subdivision-free strengthening used by Bar\'at and T\'oth~\cite{BaratToth,KostochkaStiebitzExcess}.

\begin{lemma}\label{lem:critical-edges}
Let $G$ be $r$-critical with $r\ge4$. Then
\begin{align}
 m&\ge
 \ceil{\frac{(r+1)(r-2)n-r(r-3)}{2(r-1)}},
 \tag{8}\label{eq:KY}\\
 m&\ge
 \ceil{\frac{(r-1)n+(n-r)(2r-n)-2}{2}}
 \quad (r+2\le n\le2r-1),
 \tag{9}\label{eq:Gallai-edge}\\
 m&\ge
 \ceil{\frac{(r-1)n+2(r-3)}{2}},
 \tag{10}\label{eq:KS}
\end{align}
provided in \eqref{eq:KS} that $G$ contains no subdivision of $K_r$.
\end{lemma}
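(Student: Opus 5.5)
The three inequalities in Lemma~\ref{lem:critical-edges} are essentially citations, so the plan is to reduce each one to the published statement it comes from. Each source result is phrased as a real-valued lower bound on $m$ (or on $2m$), and the ceilings come for free because $m$ is an integer.

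For \eqref{eq:KY} I will invoke the Kostochka--Yancey theorem~\cite{KostochkaYancey}. In its standard form it says that every $r$-critical graph with $r\ge4$ satisfies
\[
m\ge \frac{(r+1)(r-2)n-r(r-3)}{2(r-1)}.
\]
The only checks are that this normalisation matches theirs ($f_r(n)=\lceil\cdot\rceil$ with the same constants) and that no exceptional small cases arise for $r\ge4$. There are none: the bound is tight for $K_r$ at $n=r$, since $(r+1)(r-2)r-r(r-3)=r(r-1)^2$ gives $m\ge r(r-1)/2$.

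For \eqref{eq:Gallai-edge} I will use Gallai's structure theorem~\cite{Gallai}. If $n\le 2r-2$, the complement of $G$ is disconnected, so $G$ is a join $G_1+G_2$ of smaller critical graphs with $\chi(G_i)=r_i$ and $r_1+r_2=r$. The bound then follows by induction on $n$.
\begin{itemize}
\item For the join, count the inner edges of each $G_i$ by the minimum-degree bound $\delta(G_i)\ge r_i-1$ (or by the same estimate inductively), and add the $n_1n_2$ cross edges.
\item Minimise the result over admissible splittings with $n_i\ge r_i$. The sum $\tfrac{(r-1)n+(n-r)(2r-n)}{2}$ is what one obtains from the excess counting $\sum(\deg v-(r-1))=n_1n_2-\ldots$; I expect the worst case to be one part complete ($K_{r_1}$) and the other a small-excess critical graph, with the $-2$ absorbing the odd-cycle/parity loss.
\item Treat the boundary case $n=2r-1$ separately. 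Gallai's decomposition does not apply there, and the bound reduces to $m\ge \tfrac{(r-1)n+(r-1)-2}{2}$, i.e.\ $2m\ge(r-1)n+r-3$. This follows from Gallai's theorem that the low-degree vertices induce a Gallai forest, which gives the excess bound $2m-(r-1)n\ge r-3$.
\end{itemize}
Verifying that the minimum over splittings is exactly the displayed quadratic is the main obstacle. I would first check that the expression is concave in the split parameter, so that the minimum occurs at an extreme split, and then compare the two extremes directly.

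For \eqref{eq:KS} I will cite the Kostochka--Stiebitz excess result as used by Bar\'at--T\'oth~\cite{BaratToth,KostochkaStiebitzExcess}. An $r$-critical graph other than $K_r$ has excess $2m-(r-1)n\ge 2(r-3)$ unless it belongs to a specific exceptional family. Each member of that family contains a subdivision of $K_r$ (they are built from $K_r$ by Haj\'os-type operations along a path), so the hypothesis that $G$ is $K_r$-subdivision-free excludes them, and $K_r$ itself is excluded trivially. Dividing the excess inequality by $2$ and taking the ceiling gives \eqref{eq:KS}.
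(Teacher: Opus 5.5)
Your treatment of \eqref{eq:KY} and \eqref{eq:KS} matches the paper's. The lemma is stated there without proof, as a citation of Kostochka--Yancey, Gallai, and the Bar\'at--T\'oth use of Kostochka--Stiebitz. Your parenthetical description of the Kostochka--Stiebitz exceptional family is not needed and should not be relied on, since the subdivision-free form is exactly the cited input.

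The real difference is \eqref{eq:Gallai-edge}. The paper cites it from Gallai; you re-derive it from the join decomposition of Lemma~\ref{lem:gallai-join} and Gallai's excess bound $2m-(r-1)n\ge r-3$ for $G\ne K_r$. That route works, and the step you call the main obstacle needs no concavity argument. Let $G=G_1\vee G_2$, where $G_i$ is $r_i$-critical on $n_i=r_i+p_i$ vertices, and put $\epsilon_i=2m_i-(r_i-1)n_i$. Direct expansion gives
\[
2m-(r-1)n=\epsilon_1+\epsilon_2+r_1p_2+r_2p_1+2p_1p_2 .
\]
Now prove by induction on $n$ that every non-complete critical graph satisfies $2m-(r-1)n\ge p(r-p)-2$, where $p=n-r$:
\begin{enumerate}[label=(\roman*)]
\item for $p\ge r$ this is trivial, since the right side is negative and the excess is nonnegative;
\item at $p=r-1$ it is exactly Gallai's excess bound;
\item for $2\le p\le r-2$, the identity plus induction yields exactly $p(r-p)-2$ when one factor is complete, and $p(r-p)-2+(4p_1p_2-2)$ when neither is.
\end{enumerate}

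Two details of your sketch must be adjusted to fit this. First, the minimum-degree estimate $\epsilon_i\ge0$ is not enough for the non-complete factor: in the tight case it falls short by $p(r_2-p)-2$, so the inductive estimate is essential. Second, the inductive statement must extend past $n\le 2r-1$. A factor such as $C_7$ in $K_{r-3}\vee C_7$ lies outside the range of \eqref{eq:Gallai-edge}, and case (i) is what handles it.

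Your route gives a self-contained derivation of \eqref{eq:Gallai-edge} from two structural facts of Gallai, one of which the paper already uses. The paper's bare citation is shorter and loses nothing, since the lemma is only an input.
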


We write $M_0(r,n)$ for the maximum of the applicable right sides of \eqref{eq:KY}--\eqref{eq:KS}.  For a counterexample $G$, the subdivision-free hypothesis in \eqref{eq:KS} follows from Lemma~\ref{lem:counter-facts}.  Later, when \eqref{eq:KS} is applied to a factor $H$ in a join $G=K_s\vee H$ with $k=r-s$, the same hypothesis also holds: a subdivision of $K_k$ in $H$, together with the $s$ universal vertices and the join edges, would form a subdivision of $K_r$ in $G$.

\subsection{Gallai decomposition}

\begin{lemma}[Gallai~\cite{Gallai}]\label{lem:gallai-join}
If $G$ is $r$-critical and $n\le2r-2$, then $\overline G$ is disconnected. More precisely,
\[
G=G_1\vee\cdots\vee G_t,
\]
where $G_i$ is $r_i$-critical, $\sum_i r_i=r$, and every non-singleton part satisfies
\[
|G_i|\ge2r_i-1.
\]
\end{lemma}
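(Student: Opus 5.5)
The plan is to follow Gallai's classical argument through the complement. First I show that $\overline G$ is disconnected; then I split $G$ as a join along the components of $\overline G$ and read off the criticality, additivity and order conditions. Throughout, $G$ is $r$-critical with $n\le 2r-2$.

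\emph{Step 1: the join structure is forced by disconnectedness of $\overline G$.} Suppose $\overline G$ has components with vertex sets $V_1,\dots,V_t$, and put $G_i=G[V_i]$. Every edge between distinct $V_i$ and $V_j$ is present in $G$, so $G=G_1\vee\cdots\vee G_t$. Colours cannot be shared across parts, hence $\chi(G)=\sum_i\chi(G_i)$. I then check that each $G_i$ is $\chi(G_i)$-critical. If some proper subgraph $G_i'\subsetneq G_i$ had $\chi(G_i')=\chi(G_i)$, the proper subgraph $G_1\vee\cdots\vee G_i'\vee\cdots\vee G_t$ of $G$ would still have chromatic number $r$, contradicting criticality. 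Setting $r_i=\chi(G_i)$ gives $\sum r_i=r$.

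\emph{Step 2: the order bound $|G_i|\ge 2r_i-1$ for non-singleton parts.} Each such $G_i$ is $r_i$-critical and has connected complement by construction. Suppose $|G_i|\le 2r_i-2$. Then Step 3, applied to $G_i$ in place of $G$, would make $\overline{G_i}$ disconnected, a contradiction. So the bound is immediate once Step 3 is established for all critical graphs, and the argument can be organised as one induction on order.

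\emph{Step 3: the main obstacle, Gallai's theorem that $n\le 2r-2$ forces $\overline G$ to be disconnected.} I would argue by contradiction: assume $\overline G$ is connected.
\begin{itemize}
\item Fix a colouring of $G$ with $r$ colours. Removing any edge $uv$ gives an $(r-1)$-colouring in which $u$ and $v$ share a colour.
\item A colour class of size at least $2$ is an independent set of $G$, i.e.\ a clique in $\overline G$.
\item I would use the matching formulation: $\chi(G)=n-\nu$, where $\nu$ is the maximum number of disjoint $\overline G$-cliques that can be merged. More concretely, the $r$-critical graphs with $n\le 2r-2$ correspond to complements whose clique-cover structure is tight.
\end{itemize}
The cleanest route I know is Gallai's via the Gallai--Edmonds decomposition applied to $\overline G$. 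One shows that in an optimal colouring nearly all classes have size at most $2$, since $n\le 2r-2$ leaves at most $r-2$ "excess" vertices beyond one per class. The colouring then essentially corresponds to a matching in $\overline G$. Criticality forces every vertex of the connected graph $\overline G$ to be missed by some maximum matching, so $\overline G$ is factor-critical. Gallai's lemma then makes $\overline G$ a factor-critical connected graph, and for such graphs the matching deficiency is $1$. This yields $n=2(r-1)+1=2r-1$, contradicting $n\le 2r-2$.

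The delicate point is handling colour classes of size at least $3$. I would try to show that such classes can be traded, via a Kempe-style exchange, for size-$2$ classes without increasing the colour count. If that fails, I would instead simply cite Gallai~\cite{Gallai}, which is how the paper uses the result.
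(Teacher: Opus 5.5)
\paragraph{Where the proof breaks.} Your Steps 1 and 2 are fine. Once the disconnectedness statement is known for every critical graph, taking $G_i=G[V_i]$ over the components of $\overline G$ gives the join, $\sum_i r_i=r$, criticality of each part, and $|G_i|\ge 2r_i-1$, exactly as you argue. The paper itself gives no proof here; it only cites Gallai. The gap is Step 3, which is the whole content of the lemma. Your argument proves it only under the extra hypothesis $\alpha(G)\le 2$.

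\paragraph{Why the matching argument does not extend.} The identity $\chi(G)=n-\nu(\overline G)$ holds only when every colour class has size at most $2$. In general $\chi(G)$ is the clique-cover number of $\overline G$, and $n-\nu(\overline G)$ is merely an upper bound. Without the identity, criticality ($\chi(G-v)=r-1$) does not give $\nu(\overline G-v)=\nu(\overline G)$, so you cannot invoke Gallai's lemma or conclude that $\overline G$ is factor-critical.

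Your counting does not supply the reduction either. From $n\le 2r-2$ you only get at least $2r-n\ge 2$ singleton classes; nothing bounds the other classes, since one class can absorb all the excess. No Kempe-type exchange can trade classes of size $\ge 3$ for classes of size $\le 2$ without adding colours. For instance, $K_{r-3}\vee C_7$ is $r$-critical with $n=r+4\le 2r-2$ for $r\ge 6$. Every $r$-colouring puts the $7$ cycle vertices into $3$ classes, so some class has size at least $3$, and $n-\nu(\overline G)=r+1>r$. This graph has disconnected complement, as it must. But it shows that any reduction to classes of size $\le 2$ would itself have to use the connectivity of $\overline G$ essentially, and that is precisely what is to be proved.

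\paragraph{What is missing.} A proof must handle arbitrary independent sets of $G$, i.e.\ arbitrary cliques of $\overline G$. If you take the Gallai--Edmonds decomposition $(D,A,C)$ of $\overline G$, write $d=n-2\nu(\overline G)$ for the deficiency. The matching colouring then yields only $n\ge 2r-d$. The case $d\ge 2$, where $A\neq\varnothing$ by connectivity, has to be excluded by a further use of criticality. For example, you could argue inductively on pieces such as the subgraphs induced by a $D$-component together with vertices of $A$, noting that $G-A$ is a join over the components of $\overline G-A$. Your sketch contains no such step. What you reproduce is exactly the $\alpha\le 2$ argument the paper uses for the terminal graph in the proof of Theorem~\ref{thm:exact-terminal}. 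As it stands, Step 3 is unproved and must be replaced by the citation to Gallai~\cite{Gallai}, which is what the paper does.
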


Here $\vee$ denotes the join. In particular, every non-singleton part has $r_i\ge3$.

We also use the small-excess subdivision theorem.

\begin{lemma}[Bar\'at--T\'oth; Luiz--Richter]\label{lem:small-excess}
If an $r$-critical graph has at most $r+5$ vertices, then it contains a subdivision of $K_r$.
\end{lemma}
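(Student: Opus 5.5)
The plan is to argue by induction on $r$ and use the Gallai decomposition of Lemma~\ref{lem:gallai-join} to reduce to the case where the critical graph is a join with a very sparse complement. Let $G$ be $r$-critical with $n\le r+5$ vertices. If $n=r$ then $G=K_r$ and there is nothing to prove. Since an $r$-critical graph cannot have $n=r+1$, we may assume $r+2\le n\le r+5$, so the excess $d:=n-r$ lies in $\{2,3,4,5\}$. As $n\le 2r-2$ once $r\ge 7$ (the finitely many smaller cases being classical or checked directly), Lemma~\ref{lem:gallai-join} gives $G=G_1\vee\cdots\vee G_t$ with each non-singleton part $G_i$ being $r_i$-critical on $n_i\ge 2r_i-1$ vertices. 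Writing $d_i:=n_i-r_i$, we get $d=\sum_i d_i$, and every non-singleton part has $d_i\ge r_i-1\ge 2$. Hence there are at most two non-singleton parts, and each has $r_i\le d_i+1\le 6$ and $n_i\le 2d_i+1\le 11$. In other words, $G=K_s\vee H$, where $H$ is the join of the non-singleton parts, is $k$-critical with $k=r-s$, has excess $d$, and has order at most $11$.

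The subdivision is assembled in two steps. \emph{First}, find inside $H$ a set $B$ of $k$ branch vertices. These need not all be joined by paths inside $H$. \emph{Second}, route each pair of nonadjacent branch vertices $u,v\in B$ through a vertex of $K_s$, using $u\!-\!w\!-\!v$ with $w\in V(K_s)$, which is adjacent to everything. Such a path consumes one universal vertex, and that vertex then cannot be a branch vertex. So we should take the branch vertices to be the $s'$ universal vertices not used for routing, together with $k+(s-s')$ vertices of $H$, and embed a subdivision of $K_{r}$ accordingly. The cleanest route is to prove the auxiliary claim: \emph{every $k$-critical graph $H$ of order $k+d$ with $d\le5$ contains a set of $k$ vertices whose non-edges inside it can be covered by internally disjoint paths in $H$, or rerouted using at most $d$ external vertices}. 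Since $d\le5$ and $|H|\le11$, $H$ belongs to an explicit finite list. Using Lemma~\ref{lem:critical-edges}, the complement of $H$ has few edges, so the deficit between the number of non-edges among chosen branch vertices and the available universal vertices can be bounded.

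Concretely, choose $B\subseteq V(H)$ to consist of the $k$ vertices of largest degree, and let $D=V(H)\setminus B$, with $|D|=d$. Every missing edge in $B$ must be realized by a path through $D$ or through universal vertices, each internal vertex used at most once. A counting argument shows the number of non-edges in $B$ is at most roughly $\binom{k}{2}-\bigl(m(H)-\text{edges meeting }D\bigr)$. Then \eqref{eq:Gallai-edge} and \eqref{eq:KY}, applied to $H$, bound this by a small number, at most about $d$ plus the number of $D$-vertices. Using Menger-type disjoint routing through $D$, which is possible since each $D$-vertex has degree at least $k-1$ in $H$, covers these non-edges.

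The main obstacle is this last step: making the disjoint-path routing rigorous uniformly for all $r$. I would first handle it by reduction to the finitely many possible $H$ (orders at most $11$, criticality at most $6$, or two such factors). For these, the existence of a $K_k$-subdivision-with-helpers can be checked case by case, mirroring the arguments of Bar\'at--T\'oth and Luiz--Richter. The universal part $K_s$ then extends any such structure to a $K_r$ subdivision for all $r$.
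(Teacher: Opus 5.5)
\textbf{Verdict: the proposal has a genuine gap at its central step.} The paper gives no proof of this lemma; it imports it, taking $n\le r+4$ from Bar\'at--T\'oth and $n=r+5$ from Luiz--Richter.

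\textbf{What works.} Your Gallai reduction to $G=K_s\vee H$, with at most two non-singleton factors, is correct. One slip: with two factors $|H|$ can be $12$, not $11$. For instance, $C_5\vee M$, with $M$ the Moser spindle, is $7$-critical on $12=7+5$ vertices.

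\textbf{The helper routing is a detour.} Everything after the reduction, which is the entire content of the lemma, is missing. The example above has $s=0$. More generally, the statement applied to $H$ itself (a $k$-critical graph on $k+d\le k+5$ vertices) already demands a $K_k$-subdivision of $H$ with no universal helpers. Conversely, any such subdivision extends to $K_s\vee H$ by taking all $s$ universal vertices as extra branch vertices. So the full statement contains, and for $r\ge7$ follows from, the finite assertion that every $H$ on your list contains a subdivision of $K_k$. Routing through $K_s$ reduces no work.

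\textbf{Your routing sketch fails on its own terms.}
\begin{itemize}
\item You keep $|B|=k$ while also routing through universal vertices. Those vertices then leave the branch set, so you end with fewer than $r$ branch vertices.
\item Only $n-r=d$ non-branch vertices exist, and internally disjoint paths cannot share them. So the branch set can contain at most $d$ non-edges. Your estimate of roughly $d+|D|=2d$ non-edges could not be routed even if it were proved.
\item Minimum degree $k-1$ for vertices of $D$ gives neither common neighbours of the missing pairs nor a disjoint linkage.
\item Taking $B$ to be the $k$ highest-degree vertices is never justified.
\end{itemize}

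\textbf{The remaining finite check is the cited theorem itself.} Verifying the finitely many $H$ \emph{case by case, mirroring Bar\'at--T\'oth and Luiz--Richter} is exactly what those papers prove. So the proposal ends up resting on the same citation as the paper rather than supplying a proof. The list is not small. It contains all $k$-critical graphs with connected complement on $k+d$ vertices with $3\le k\le6$, $2\le d\le5$, $d\ge k-1$ (for instance $4$-critical graphs on $9$, $5$-critical on $10$, and $6$-critical on $11$ vertices). It also contains $C_5\vee C_5$ and $C_5\vee M'$ for every $4$-critical $M'$ on $7$ vertices.

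\textbf{The small cases are not classical.} For $r\in\{5,6\}$ we have $r+5>2r-2$, so Gallai's lemma gives nothing. Haj\'os' conjecture is open for $r=5,6$, so these cases also need explicit verification.

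\textbf{What to do.} Either cite the two results as the paper does, or carry out the finite verification (structurally or by exhaustive search). That verification must use the correct constraint: the branch set may contain at most $d$ non-edges, and these must be linkable by internally disjoint paths through the $d$ remaining vertices.
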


The case up to $r+4$ is due to Bar\'at and T\'oth~\cite{BaratToth}; the case $r+5$ is due to Luiz and Richter~\cite{LuizRichter}.

\subsection{Complete and bipartite lower bounds}

We need lower bounds for complete graphs that do not assume the Harary--Hill conjecture. McQuillan, Pan and Richter proved $\crn(K_{13})\ge219$~\cite{MPR}. Brosch and Polak proved~\cite{BroschPolak}
\[
\crn(K_{13,t})\ge \frac{34627}{4000}t^2-18t.
\tag{11}\label{eq:BP13t}
\]

Define $C_{13}=219$. For $q\ge14$ put
\[
C_q:=\max\left\{
\ceil{\frac{qC_{q-1}}{q-4}},\ B_q
\right\},
\tag{12}\label{eq:Cq}
\]
where $B_{14}=0$ and, for $q\ge15$ with $t=q-13$,
\[
B_q:=
\ceil{
\frac{q(q-1)(q-2)(q-3)(34627t^2-72000t)}
{4000\cdot4\cdot13\cdot12\,t(t-1)}
}.
\tag{13}\label{eq:Bq}
\]
The first term in \eqref{eq:Cq} is vertex-deletion averaging; the second follows by averaging \eqref{eq:BP13t} over all $13$--$t$ bipartitions. Thus
\[
\crn(K_q)\ge C_q\qquad(q\ge13).
\tag{14}\label{eq:Cq-valid}
\]

Kleitman's formula~\cite{Kleitman} gives, for $3\le d\le6$,
\[
\crn(K_{d,t})=
\floor{\frac d2}\floor{\frac{d-1}{2}}
\floor{\frac t2}\floor{\frac{t-1}{2}}.
\tag{15}\label{eq:Kleitman}
\]
Averaging the $K_{6,t}$ formula over six-vertex subsets of one side yields
\[
\crn(K_{c,t})\ge
\frac{c(c-1)}5
\floor{\frac t2}\floor{\frac{t-1}{2}}
\qquad(c\ge6).
\tag{16}\label{eq:K6average}
\]

\subsection{The order reduction}

We use two order estimates of Cranston~\cite[Theorems~3 and~4]{Cranston}.  Since the new part of the proof begins at $r=19$, their hypotheses apply throughout.

\begin{lemma}[Cranston]\label{lem:cranston}
Let $G$ be an $r$-critical counterexample to Albertson's conjecture, with $r\ge19$. Then
\[
|G|<\frac{307}{250}r
\quad\text{or}\quad
\frac{221}{125}r\le |G|\le\frac{141}{50}r.
\]
\end{lemma}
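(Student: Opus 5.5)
This lemma is quoted from Cranston~\cite[Theorems~3 and~4]{Cranston}, and the paper uses it as a citation rather than proving it afresh. So the plan is mainly to check that its hypotheses are met and to show how the two order ranges come about. The approach is a dichotomy on $n=|G|$ that combines the crossing lower bounds with the counterexample upper bound $\crn(G)\le \ZZ(r)-1$ from Lemma~\ref{lem:counter-facts}.

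\textbf{Step 1: exclude large orders, $n>\frac{141}{50}r$.} Here I would bound $m$ below by the Kostochka--Yancey bound \eqref{eq:KY}, which gives $m\gtrsim \frac{(r+1)(r-2)}{2(r-1)}n$. I would then insert this into the sampling inequality \eqref{eq:sample} with the B\"ungener--Kaufmann bound \eqref{eq:BK5}, choosing the sample size $s$ to be roughly a constant fraction of $n/(\text{density})$ so as to optimise. The resulting lower bound grows like $c\,m^3/n^2\sim c' r^3 n$. Once $n/r$ exceeds $141/50$, this exceeds $\ZZ(r)\approx r^4/64$. That argument only works if $n$ is comparable to $r$, so I would still need to rule out, say, $n\ge Cr$ separately. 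For that range I would use $m\ge (r-1)n/2$ together with the crossing lemma: this gives $\crn\gtrsim m^3/n^2\gtrsim r^3n/30$, which beats $r^4/64$ once $n$ is a fixed multiple of $r$. Throughout I would use integrality and exact rounding of the bounds.

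\textbf{Step 2: exclude the gap $\frac{307}{250}r\le n<\frac{221}{125}r$.} In this range $n\le 2r-2$, so Gallai's Lemma~\ref{lem:gallai-join} applies and $G=G_1\vee\cdots\vee G_t$. Every non-singleton part satisfies $|G_i|\ge 2r_i-1$. Writing $s$ for the number of singleton parts, I would get $n-s\ge 2(r-s)-1$, so $s\ge 2r-n-1$, which is large in this range. The vertices of $G$ therefore split into a clique $K_s$ joined to a remainder $H$ with $\chi(H)=r-s$. I would then bound $\crn(G)$ from below in two parts. The first is $\crn(K_s)$, estimated with the $C_q$ recursion \eqref{eq:Cq-valid}. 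The second is the bipartite crossings between the clique and $H$, estimated by the $K_{c,t}$ averaging \eqref{eq:K6average} applied to the complete bipartite graph between the singletons and $H$. I would also use the Gallai edge bound \eqref{eq:Gallai-edge} inside the sampling inequality. Comparing the sum with $\ZZ(r)$ should give a contradiction exactly for $n/r$ between $307/250$ and $221/125$.

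\textbf{Main obstacle.} The hard part is Step 2. There the join structure gives $\crn(K_s)+\crn(K_{s,n-s})$-type bounds, which are only asymptotically close to $\ZZ(r)$. Making the constants $307/250$ and $221/125$ come out requires a careful optimisation over $s$ together with a finite check for small $r\ge 19$. In practice I would defer to Cranston's Theorems~3 and~4 for this computation, since the present paper relies on it.
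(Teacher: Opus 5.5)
The part of your proposal that actually proves the lemma is the closing deferral to Cranston's Theorems~3 and~4, and that is also all the paper does. However, the paper's proof consists exactly of the bookkeeping you omit:
\begin{itemize}
\item Theorem~4 excludes the closed interval $\frac{307}{250}r\le n\le\frac{221}{125}r$.
\item Theorem~3 excludes $n\ge 2.8118r$ for $r\ge15$, a hypothesis covered by $r\ge19$. It therefore excludes $n\ge\frac{141}{50}r=2.82r$ a fortiori.
\item What remains is $n<\frac{307}{250}r$ or $\frac{221}{125}r<n<2.8118r$. The stated middle range is a deliberate enlargement of the latter.
\end{itemize}
You announce a hypothesis check but never carry it out, and this reduction is the whole content of the lemma.

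Your reconstruction of where the ranges come from would not stand as a proof, and Step~2 fails outright.

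\emph{The singleton count is wrong.} Summing $|G_i|\ge 2r_i-1$ over the $t'$ non-singleton factors gives $n-s\ge 2(r-s)-t'$, not $2(r-s)-1$. Using $r_i\ge3$, one gets only $s\ge r-\frac32(n-r)$, as in \eqref{eq:s-universal}. This bound is $r/4$ at $n=\frac32 r$ and vacuous for $n\ge\frac53 r$. For $3\mid r$, the join of $r/3$ copies of $C_5$ is $r$-critical on $\frac53 r$ vertices, which lies inside the gap, and it has no singleton factor at all.

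\emph{The join bound is too weak even when singletons exist.} At $n=\frac32 r$, the join of $K_{r/4}$ with $r/4$ copies of $C_5$ shows that $s=r/4$ is sharp. There
\[
\crn(K_s)+\crn(K_{s,n-s})\le s^4/64+s^2(n-s)^2/16\approx0.006r^4,
\]
far below $\ZZ(r)\approx r^4/64$. No optimisation over $s$ rescues the join route.

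\emph{The mechanism that matches the constants is the one you mention only as an add-on.} With $n=\alpha r$, \eqref{eq:Gallai-edge} gives $m/r^2\approx(-\alpha^2+4\alpha-2)/2$. Feeding this into the exact sampled form \eqref{eq:sample} of \eqref{eq:BK5} gives, to leading order in $r$, a bound that just exceeds $r^4/64$ at $\alpha=1.228$ (sample size $12$) and at $\alpha=1.768$ (sample size $19$), and fails just outside these values.

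\emph{Step~1 uses the right mechanism, but the heuristic is off.} The asymptotic form $c\,m^3/n^2$ with $m\approx rn/2$ and $c\approx 1/27.5$ gives only about $r^3n/220$, not $r^3n/30$. That excludes only $n\gtrsim 3.4r$. A threshold near $2.81r$ appears only from \eqref{eq:sample} at a small fixed sample size (about $36$), where the falling-factorial corrections are substantial.
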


\begin{proof}
Cranston's Theorem~4 excludes
\[
\frac{307}{250}r\le |G|\le\frac{221}{125}r.
\]
His Theorem~3 excludes $|G|\ge2.8118r$ for $r\ge15$, and hence in particular excludes $|G|\ge141r/50=2.82r$.  Enlarging the remaining middle interval to include its two boundary values gives the displayed conservative form.
\end{proof}

Thus only the two ranges in \eqref{eq:two-ranges} remain.  We use these two order estimates only as attributed external inputs; none of the near- or middle-range arguments below uses their proofs.

\section{Critical-core compression}\label{sec:compression}

The middle-range argument needs a lower bound on $m$ that becomes stronger when the clique number is bounded. We obtain it by repeatedly deleting independent triples.

\subsection{One independent-triple step}

Let $H$ be $(s+1)$-critical and suppose $H$ contains an independent set $S$ of three vertices. Then $\chi(H-S)=s$: it is at most $s$ by criticality, and if it were at most $s-1$, the three vertices of $S$ could receive one new colour. Choose an $s$-critical subgraph $J\subseteq H-S$. Put
\[
U:=V(H)\setminus V(J),\qquad t:=|U|\ge3.
\]
Let $e_U$ be the number of edges inside $U$ and let $c_U$ be the number of edges between $U$ and $V(J)$.

Every vertex of $H$ has degree at least $s$. The three vertices of $S$ have at most $t-3$ neighbours in $U$; each of the other $t-3$ vertices has at most $t-1$. Hence
\[
c_U\ge b_s(t):=(t-3)\max(0,s-t+1)+3\max(0,s-t+3).
\tag{17}\label{eq:bs}
\]
Also $2e_U+c_U\ge st$. Since $S$ is independent, $e_U\le\binom t2-3$. It follows that the number of edges of $H$ not belonging to $J$ is at least
\[
E_s(t):=\max\left\{
\ceil{\frac{st+b_s(t)}2},\
st-\binom t2+3
\right\}.
\tag{18}\label{eq:Es}
\]

For $e\ge0$ define
\[
J_s(e):=E_s(3+e)-3s.
\tag{19}\label{eq:Jsdef}
\]

\begin{lemma}[Jump formula]\label{lem:jump-formula}
For $s\ge3$ and $e\ge0$,
\[
J_s(e)=
\begin{cases}
\dfrac{e(2s-e-5)}2,&0\le e\le s-2,\\[2mm]
\floor{\dfrac{(s-2)^2}{2}},&e=s-1,\\[2mm]
\ceil{\dfrac{s(e-3)}2},&e\ge s.
\end{cases}
\tag{20}\label{eq:Jformula}
\]
Moreover, $J_{s+1}(e)\ge J_s(e)$ for fixed $e$.
\end{lemma}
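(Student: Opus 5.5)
The plan is to compute $J_s(e)$ directly from the definitions \eqref{eq:bs}, \eqref{eq:Es} and \eqref{eq:Jsdef} with $t=3+e$. In each range of $e$ I will evaluate both terms of the maximum in \eqref{eq:Es} and see which one wins.

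With $t=3+e$ we have $s-t+1=s-e-2$ and $s-t+3=s-e$, so
\[
b_s(3+e)=e\max(0,s-e-2)+3\max(0,s-e).
\]
Subtracting $3s$ from the second term of \eqref{eq:Es} gives, for every $e$,
\[
s(3+e)-\binom{3+e}{2}+3-3s=se-\frac{e^2+5e}{2}=\frac{e(2s-e-5)}2 .
\]
This is an integer, because $e(e+5)$ is even.

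\textbf{Case $0\le e\le s-2$.} Both maxima in $b_s$ are the nonzero arguments, so $b_s=e(s-e-2)+3(s-e)$. The first term minus $3s$ is then
\[
\frac{6s+2es-e^2-5e}{2}-3s=\frac{e(2s-e-5)}2 .
\]
This is an integer, so the ceiling is harmless. The two terms coincide, and $J_s(e)=e(2s-e-5)/2$.

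\textbf{Case $e=s-1$.} Here $b_s=3$. The first term gives $\lceil (s^2-4s+3)/2\rceil=\lceil((s-2)^2-1)/2\rceil$. Checking parity of $s$, this equals $\lfloor (s-2)^2/2\rfloor$. The second term is $(s^2-5s+4)/2$, which is smaller by more than $1/2$, so the first term is the maximum.

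\textbf{Case $e\ge s$.} Both maxima in $b_s$ vanish, so $b_s=0$ and the first term gives $\lceil s(e-3)/2\rceil$. The second term is $e(2s-e-5)/2$. Their gap satisfies
\[
s(e-3)-e(2s-e-5)=e(e-s)+5e-3s\ge 2s>0 ,
\]
so the first term wins.

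For monotonicity in $s$ at fixed $e$, I compare $J_{s+1}(e)$ with $J_s(e)$ according to which branch each falls in.
\begin{itemize}
\item If $e\le s-2$, both values lie in the first branch, and $J_{s+1}(e)-J_s(e)=e\ge0$.
\item If $e=s-1$, then $J_{s+1}(e)=(s-1)(s-2)/2\ge\lfloor (s-2)^2/2\rfloor$.
\item If $e=s$, then $J_{s+1}(e)=\lfloor (s-1)^2/2\rfloor$ and $J_s(e)=\lceil s(s-3)/2\rceil$. Since $(s-1)^2-s(s-3)=s+1\ge4$, the unrounded values differ by at least $2$, which absorbs the rounding.
\item If $e\ge s+1$, both values are in the third branch, and $\lceil s(e-3)/2\rceil$ is nondecreasing in $s$ because $e-3\ge s-2\ge1$.
\end{itemize}

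The only delicate points are the rounding at the boundary values $e=s-1$ and $e=s$. Everything else is routine algebra. The main obstacle is making sure the $e=s-1$ ceiling evaluates exactly to $\lfloor (s-2)^2/2\rfloor$, which I will settle by the parity check above.
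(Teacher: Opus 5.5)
Your proposal is correct and follows the same route as the paper. You substitute $t=3+e$, compare the two terms of $E_s$ on the ranges $e\le s-2$, $e=s-1$, $e\ge s$, and prove monotonicity within common branches plus directly at the two branch changes $e=s-1$ and $e=s$. You also supply the arithmetic the paper leaves as ``direct substitution'': the parity check at $e=s-1$, the gap $e(e-s)+5e-3s\ge 2s$, and the rounding estimate at $e=s$.
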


\begin{proof}
Substitute $t=3+e$ in \eqref{eq:bs}--\eqref{eq:Es}. For $e\le s-2$ both maxima in \eqref{eq:bs} are active and the two terms of \eqref{eq:Es} agree. At $e=s-1$ only the second maximum survives and the first term of \eqref{eq:Es} is larger. For $e\ge s$, $b_s(t)=0$, and the first term is larger; the unrounded difference is
\[
\frac{e^2+(5-s)e-3s}{2},
\]
which is positive at $e=s$ and increasing afterwards. This gives \eqref{eq:Jformula}. Monotonicity is immediate inside each common branch; direct substitution settles the two branch changes.
\end{proof}

\begin{lemma}[Jump compression]\label{lem:subadd}
If $a,b\ge0$ and $a+b\le2s-7$, then
\[
J_s(a)+J_s(b)\ge J_s(a+b).
\tag{21}\label{eq:subadd}
\]
\end{lemma}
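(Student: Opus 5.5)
We take $a\le b$ without loss of generality and put $c=a+b\le 2s-7$. If $a=0$ there is nothing to prove, since $J_s(0)=0$, so assume $a\ge1$. Note that $2s-7\ge s-1$ forces $s\ge 6$ whenever the branches $e=s-1$ or $e\ge s$ of \eqref{eq:Jformula} can occur; for $s\le 5$ we have $c\le s-2$, and only the first branch is needed. The plan is to split according to which branch of \eqref{eq:Jformula} each of $a$, $b$, $c$ lies in, and to compare twice the left side with twice the right side. All quantities are integers, so an inequality $2X\ge Y$ with $X$ an integer gives $X\ge\ceil{Y/2}$, and this disposes of the roundings.

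\emph{Case $c\le s-2$.} All three arguments lie in the quadratic branch, and the identity
\[
a(2s-a-5)+b(2s-b-5)-c(2s-c-5)=2ab
\]
gives $J_s(a)+J_s(b)-J_s(c)=ab\ge0$.

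\emph{Case $c=s-1$.} Here $a,b\le s-2$. Twice the left side is $(s-1)(s-4)+2ab$. Since $(s-2)^2-(s-1)(s-4)=s$, it suffices to show $2ab\ge s$. This holds because $ab\ge s-2$ when $a\ge1$, and $2(s-2)\ge s$ for $s\ge4$.

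\emph{Case $c\ge s$, with $a,b\le s-2$.} Twice the left side is $c(2s-5)-c^2+2ab$, and we need it to be at least $s(c-3)$. For fixed $c$, the product $ab$ is smallest at the extreme split $a=c-s+2$, $b=s-2$. Substituting this gives the deficit
\[
D(c)=-c^2+(3s-9)c+3s-2(s-2)^2 .
\]
This is concave in $c$, so it suffices to check the endpoints. We have $D(s)=2s-8\ge0$ and $D(2s-7)=6$, hence $D\ge0$ on the whole range. The endpoint evaluation together with the restriction $c\le 2s-7$ is the main point of the lemma: this is exactly where the hypothesis is used, and where the margin becomes tight.

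\emph{Case $b=s-1$.} Then $a=c-b\le s-6$. We need
\[
a(2s-a-5)+2\floor{\tfrac{(s-2)^2}{2}}\ge s(a+s-4).
\]
Using $2\floor{(s-2)^2/2}\ge(s-2)^2-1$, this reduces to $a(s-a-5)+3\ge0$, which holds since $a\le s-6$.

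\emph{Case $b\ge s$.} Then $a\le s-7$, so $J_s(a)=a(2s-a-5)/2\ge sa/2$, because $s-a-5\ge0$. Since $J_s(a)$ is an integer,
\[
J_s(a)+\ceil{\tfrac{s(b-3)}2}\ge\ceil{\tfrac{s(b-3)+sa}2}=J_s(c).
\]

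These cases exhaust all possibilities, which proves \eqref{eq:subadd}.
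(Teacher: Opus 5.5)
Your proof is correct and takes essentially the same route as the paper. It uses the same case split by branch of the jump formula, the same extremal split $b=s-2$ with a concavity-and-endpoint check when both summands are quadratic but $a+b\ge s$, and the same estimate $J_s(a)\ge sa/2$ when one summand is in the linear branch. The only difference is cosmetic: you absorb the roundings by comparing twice the integer left side with the unrounded target, instead of carrying explicit $\varepsilon,\eta\in\{0,1\}$ terms. That is why your endpoint value is $6$ where the paper's is $6-\varepsilon\ge5$.
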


\begin{proof}
Put $m=a+b$. If $m\le s-2$, all three terms lie in the quadratic branch and the difference in \eqref{eq:subadd} is $ab$.

If $m=s-1$, the assertion is immediate when one summand is zero. Otherwise both summands are quadratic. The boundary value $J_s(s-1)$ exceeds the quadratic continuation by $\floor{s/2}$, while $ab\ge s-2\ge\floor{s/2}$.

Now suppose $s\le m\le2s-7$ and both summands are at most $s-2$. Write $m=s+c$, where $0\le c\le s-7$. Since neither summand exceeds $s-2$,
\[
ab\ge(c+2)(s-2).
\]
After doubling and allowing the one unit introduced by the ceiling in the linear target, the required surplus is at least
\[
c(s-c-9)+2s-8-\varepsilon,
\qquad \varepsilon\in\{0,1\}.
\]
This expression is concave in $c$. At $c=0$ it is at least $2s-9$, and at $c=s-7$ it is at least $5$.

If one summand is $s-1$, write the other as $c$, where $1\le c\le s-6$. Twice the surplus is
\[
c(s-c-5)+4-\eta-\varepsilon,
\qquad \eta,\varepsilon\in\{0,1\},
\]
which is positive.

Finally, by symmetry assume $b\ge s$. Then $a\le s-7$. The case $a=0$ is equality. Otherwise
\[
J_s(a)=\frac{a(2s-a-5)}2\ge\ceil{\frac{sa}{2}},
\]
and
\[
\ceil{\frac{s(a+b-3)}2}-\ceil{\frac{s(b-3)}2}
\le\ceil{\frac{sa}{2}}.
\]
This proves \eqref{eq:subadd}.
\end{proof}

\begin{corollary}[Compressed descent]\label{cor:compressed-descent}
Let $G$ be $r$-critical of order $n$ and put
\[
D:=3r-n.
\tag{22}\label{eq:D}
\]
Assume $D\ge7$. Repeatedly delete an independent triple and pass to a critical subgraph until a $k$-critical graph $H$ with $\alpha(H)\le2$ is obtained. Let $E$ be the total number of vertices deleted in excess of the mandatory three per step. Then
\[
m(G)\ge
\frac32\bigl(r(r-1)-k(k-1)\bigr)+J_k(E)+m(H).
\tag{23}\label{eq:compressed}
\]
\end{corollary}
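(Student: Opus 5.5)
We track the descent step by step and then compress the per-step jumps with Lemma~\ref{lem:subadd}. Write the chain as
\[
G\supseteq H_0\supseteq H_1\supseteq\cdots\supseteq H_p=H,
\]
where $H_0$ is $G$ itself, since $G$ is already $r$-critical. Each $H_{i}$ is $(r-i)$-critical and is obtained from $H_{i-1}$ by deleting an independent triple and passing to a critical subgraph, as in the one-step analysis. Thus $k=r-p$, and step $i$ goes from an $(s_i+1)$-critical graph to an $s_i$-critical graph with $s_i=r-i$.

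Let step $i$ remove $t_i=3+e_i$ vertices, so $E=\sum_i e_i$. By \eqref{eq:Es} and \eqref{eq:Jsdef}, the edges of $H_{i-1}$ not in $H_i$ number at least
\[
E_{s_i}(3+e_i)=3s_i+J_{s_i}(e_i).
\]
Summing over the steps and adding $m(H)$ gives
\[
m(G)\ge m(H)+\sum_{i=1}^p 3s_i+\sum_i J_{s_i}(e_i).
\]
The first sum is a telescoping arithmetic sum:
\[
3\sum_{s=k}^{r-1}s=\tfrac32\bigl(r(r-1)-k(k-1)\bigr).
\]

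Next we lower the subscripts. Every $s_i\ge k$, so the monotonicity in Lemma~\ref{lem:jump-formula} gives $J_{s_i}(e_i)\ge J_k(e_i)$. It then remains to show
\[
\sum_i J_k(e_i)\ge J_k(E).
\]
We do this by repeatedly merging two terms with Lemma~\ref{lem:subadd} at $s=k$. Each merge needs partial sums of the $e_i$ to be at most $2k-7$, and it suffices to prove $E\le 2k-7$.

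This vertex count is the main obstacle. The total number of deleted vertices is
\[
n-|H|=3p+E=3(r-k)+E.
\]
Hence
\[
E=n-|H|-3(r-k)=3k-|H|-D.
\]
Since $D\ge7$, we get $E\le 3k-|H|-7$, so it suffices to have $|H|\ge k$. This holds trivially because $H$ is $k$-critical.

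I expect some subtlety here: one needs every merge along the way to satisfy the hypothesis. This is automatic, because every partial sum is at most $E\le 2k-7$ and $J_k\ge0$. If the bound $|H|\ge k$ were too weak, I would use $\alpha(H)\le2$ together with $\chi(H)=k$, which gives $|H|\le 2k$, though that is the opposite direction. The bound actually needed is $3k-|H|\le 2k$, that is $|H|\ge k$, which holds. Combining the three displays yields \eqref{eq:compressed}.
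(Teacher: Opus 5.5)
Your proposal is correct and follows the paper's proof essentially step for step. Each step contributes at least $3s+J_s(e_s)$ edges, and the order count $|H|=3k-D-E\ge k$ gives $E\le 2k-7$; monotonicity then lowers every subscript to $k$, and repeated use of Lemma~\ref{lem:subadd} applies because every partial sum of the $e_i$ is at most $E$. Your closing aside about $|H|\le 2k$ is not needed, as you yourself recognize.
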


\begin{proof}
If the excesses at the successive scales are $e_s$, then each step contributes at least $3s+J_s(e_s)$ edges. The terminal order is
\[
|H|=n-3(r-k)-E=3k-D-E\ge k,
\]
so $E\le2k-D\le2k-7$. Monotonicity in Lemma~\ref{lem:jump-formula} and repeated use of Lemma~\ref{lem:subadd} give
\[
\sum_sJ_s(e_s)\ge\sum_sJ_k(e_s)\ge J_k(E).
\]
Summing $3s$ for $s=k,\ldots,r-1$ gives \eqref{eq:compressed}.
\end{proof}

\subsection{The exact terminal edge bound}

The next theorem is the finite form used for $19\le r<98$.

\begin{theorem}[Exact terminal edge bound]\label{thm:exact-terminal}
Let $G$ be $r$-critical of order $n$, with $\omega(G)\le w$, and put $D=3r-n\ge7$. For integers $k,t$ set
\[
c_k:=\min\{w,k\},\qquad E:=k+t-D,
\]
and
\begin{align}
\Psi(r,D,w;k,t):={}&
\frac32\bigl(r(r-1)-k(k-1)\bigr)+J_k(E)\notag\\
&+\binom{2k-t}{2}-(k-t)(c_k-t+1)-\floor{\frac{c_k}{2}}.
\tag{24}\label{eq:Psi}
\end{align}
Then
\[
m(G)\ge T(r,n,w),
\tag{25}\label{eq:T}
\]
where
\[
T(r,n,w):=
\min\Psi(r,D,w;k,t),
\tag{26}\label{eq:Tdef}
\]
the minimum being over
\[
3\le k\le r,\qquad
1\le t\le c_k,\qquad
k+t\ge D.
\tag{27}\label{eq:terminal-domain}
\]
\end{theorem}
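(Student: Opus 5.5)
The plan is to run the compressed descent of Corollary~\ref{cor:compressed-descent} down to its terminal graph $H$, and then to bound $m(H)$ from below through the complement $\overline H$, which is triangle-free.

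\textbf{Step 1: the descent.} Since $D\ge7$, Corollary~\ref{cor:compressed-descent} applies. It produces a $k$-critical subgraph $H$ with $\alpha(H)\le2$, total excess $E$, $|H|=3k-D-E\ge k$, and
\[
m(G)\ge\tfrac32\bigl(r(r-1)-k(k-1)\bigr)+J_k(E)+m(H).
\]
I define $t:=2k-|H|$, so that $|H|=2k-t$ and $E=k+t-D$, exactly as in \eqref{eq:Psi}. Then $E\ge0$ is the constraint $k+t\ge D$. The degenerate small cases $k\le2$ should be handled directly; I expect them to be excluded since $\alpha(H)\le2$ and criticality force $k\ge3$, or to be dominated trivially. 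It then remains to show $1\le t\le c_k$ and
\[
m(H)\ge\binom{2k-t}{2}-(k-t)(c_k-t+1)-\floor{\frac{c_k}{2}}.
\]
Together these place $(k,t)$ in the domain \eqref{eq:terminal-domain} and give $m(G)\ge\Psi(r,D,w;k,t)\ge T(r,n,w)$.

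\textbf{Step 2: structure of $H$.} Since $\alpha(H)\le2$, a proper colouring of $H$ is a partition of $V(H)$ into singletons and non-edges of $H$, that is, into singletons and edges of $\overline H$. Hence $\chi(H)=|H|-\nu(\overline H)$, where $\nu$ is the matching number. This gives $\nu(\overline H)=(2k-t)-k=k-t$.

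Fix a maximum matching $M$ of $\overline H$ and let $W$ be the set of unmatched vertices, so $|W|=t$. By maximality $W$ is independent in $\overline H$, so $W$ is a clique in $H$. Therefore $t\le\omega(H)\le\min\{w,k\}=c_k$, using $\omega(H)\le\omega(G)\le w$ and $\omega(H)\le\chi(H)=k$.

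For $t\ge1$: if $t\le0$ then $|H|\ge2k$, which is impossible when $\alpha(H)\le2$ and $\chi(H)=k$, because $|H|\le2\chi(H)-\ldots$ fails. I would argue more carefully here via Gallai's Lemma~\ref{lem:gallai-join} or criticality, in that $\overline H$ has no perfect matching with $\nu=k-|H|/2\ldots$; the precise argument is to be checked. Finally, $\overline H$ is triangle-free with independence number $\omega(H)\le c_k$. Every neighbourhood in $\overline H$ is independent, so $\Delta(\overline H)\le c_k$.

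\textbf{Step 3: counting edges of $\overline H$.} Every edge of $\overline H$ meets $V(M)$. Take a matching edge $uv$.
\begin{itemize}[leftmargin=*]
\item If $u$ and $v$ had distinct neighbours $w_1,w_2\in W$, the path $w_1uvw_2$ would be augmenting. A common neighbour would create a triangle. Hence only one endpoint of $uv$ sends edges to $W$, and $uv$ contributes at most $t$ edges to $W$.
\item The edges inside $V(M)$ are controlled by the degree bound $\Delta\le c_k$. Summing degrees over one endpoint per matching edge, the edges between $\{u,v\}$ and the rest should total at most about $c_k-t+1$ per matching edge beyond those to $W$.
\end{itemize}
Together these give $e(\overline H)\le(k-t)(c_k-t+1)+\floor{c_k/2}$, with the $\floor{c_k/2}$ absorbing one exceptional configuration. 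A blossom-type odd structure is the natural candidate, handled by a parity argument. Since $m(H)=\binom{2k-t}{2}-e(\overline H)$, this is the required bound.

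\textbf{Main obstacle.} I expect Step~3 to be the main difficulty: obtaining exactly $(k-t)(c_k-t+1)+\floor{c_k/2}$ rather than a weaker count. I would first attempt a Gallai--Edmonds decomposition of $\overline H$, combined with triangle-freeness and $\Delta\le c_k$, optimising the count per component, and compare against the extremal example of $k-t$ disjoint stars of size about $c_k-t+1$ together with one odd piece.
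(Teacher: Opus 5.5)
There is a genuine gap, and it is where you suspected: Step~3 cannot be completed from the facts you collect. The reason is that you never use that $H$ is $k$-critical beyond $\chi(H)=k$, and without criticality the inequality is false. Let $H$ be the disjoint union of $K_3$ and $K_{20}$, so $\overline H=K_{3,20}$. Then:
\begin{itemize}
\item $\alpha(H)=2$, $\chi(H)=k=20$ and $|H|=23$, so $t=17\le c_k$ (for $w\ge20$);
\item $\nu(\overline H)=3=k-t$, $\overline H$ is triangle-free, and $\Delta(\overline H)=20\le c_k$.
\end{itemize}
So every fact in your Step~2 holds. Yet $|E(\overline H)|=60$, while $(k-t)(c_k-t+1)+\floor{c_k/2}=12+10=22$. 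Each of the three matching edges sends $17$ edges into $W$. So your allowance of $t$ edges to $W$ per matching edge already exceeds the target, and no bookkeeping over one maximum matching with the global bound $\Delta(\overline H)\le c_k$ can repair this.

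The missing idea, which is the paper's route, is to feed criticality into the matching structure.
\begin{itemize}
\item For each vertex $v$, criticality gives $\chi(H-v)=k-1$. The identity $\chi(X)=|X|-\nu(\overline X)$ applied to $X=H-v$ then gives $\nu(\overline H-v)=\nu(\overline H)$.
\item So every vertex is missed by some maximum matching. By Gallai's lemma (a connected graph in which every vertex is missed by some maximum matching is factor-critical), every component $F_i$ of $\overline H$ is factor-critical, of order $2m_i+1$.
\item Hence $\overline H$ has exactly $|H|-2\nu(\overline H)=t$ components, and $\sum_i m_i=k-t$.
\item In particular $t\ge1$, which closes your other open point: if $t=0$, deleting any vertex from a graph with a perfect matching lowers $\nu$.
\end{itemize}
Now count per component rather than globally. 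Triangle-freeness gives $\Delta(F_i)\le\alpha_i:=\alpha(F_i)$, so $|E(F_i)|\le(2m_i+1)\alpha_i/2$. Since $\sum_i\alpha_i=\alpha(\overline H)=\omega(H)\le c_k$ and every $\alpha_i\ge1$, each $\alpha_i\le c_k-t+1$. Summing yields $|E(\overline H)|\le(k-t)(c_k-t+1)+c_k/2$, and integrality gives the floor. The global degree bound alone gives only $(2k-t)c_k/2$, which is too weak once $t\ge2$.

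Your proof of $t\le c_k$ via the clique $W$ is fine. The cases $k\le2$ never arise, since $k\le|H|=3k-D-E$ gives $2k\ge D\ge7$.
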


\begin{proof}
Use Corollary~\ref{cor:compressed-descent} and let $H$ be the terminal $k$-critical graph. Put $a=|H|$ and define
\[
F:=\overline H.
\]
Since $\alpha(H)\le2$, every colour class of $H$ has size one or two.  A class of size two is exactly an edge of $F$, and disjoint two-vertex colour classes form a matching.  Conversely, any matching of size $q$ in $F$ gives a colouring of $H$ with $a-q$ colours.  Therefore
\[
\chi(H)=a-\nu(F)=k,
\qquad\text{so}\qquad
\nu(F)=a-k.
\tag{28}\label{eq:matching}
\]
For every $v\in V(F)$, criticality gives $\chi(H-v)\le k-1$.  Conversely, adding $v$ back to a colouring of $H-v$ uses at most one new colour, so
\[
 k=\chi(H)\le\chi(H-v)+1.
\]
Hence $\chi(H-v)=k-1$. Since still $\alpha(H-v)\le2$, applying the same matching-colouring identity to $H-v$ gives
\[
 k-1=(a-1)-\nu(F-v),
\]
and hence
\[
\nu(F-v)=a-k=\nu(F)
\qquad(v\in V(F)).
\tag{28a}\label{eq:matching-delete}
\]
Let $C$ be a connected component of $F$ and let $v\in V(C)$. Since matching numbers add over components, \eqref{eq:matching-delete} gives $\nu(C-v)=\nu(C)$ for every $v\in V(C)$. Equivalently, every vertex of $C$ is missed by some maximum matching of $C$. Thus $D(C)=V(C)$ in the Gallai--Edmonds decomposition; since $C$ is connected, Lov\'asz--Plummer~\cite[Theorem~3.2.1, p.~94]{LovaszPlummer} implies that $C$ is factor-critical. Hence every component of $F$ is factor-critical.

Let $t$ be the number of components of $F$. Writing their orders as $2m_i+1$ gives
\[
a=2k-t,
\qquad
E=k+t-D.
\tag{29}\label{eq:aEt}
\]
Indeed, $a=2\nu(F)+t=2(a-k)+t$, and the second identity follows from $a=3k-D-E$.

The graph $F$ is triangle-free. Let $\alpha_i$ be the independence number of its $i$th component. Every neighbourhood in that component is independent, so
\[
\Delta(F_i)\le\alpha_i,
\qquad
|E(F_i)|\le\frac{(2m_i+1)\alpha_i}{2}.
\]
Furthermore
\[
\sum_i m_i=k-t,
\qquad
\sum_i\alpha_i=\alpha(F)=\omega(H)\le c_k,
\qquad
\alpha_i\ge1.
\]
Therefore
\begin{align*}
|E(F)|
&=\sum_i |E(F_i)|\\
&\le \sum_i m_i\alpha_i+\frac12\sum_i\alpha_i.
\end{align*}
Moreover,
\begin{align*}
\sum_i m_i\alpha_i
&=\sum_i m_i+\sum_i m_i(\alpha_i-1)\\
&\le (k-t)+\left(\sum_i m_i\right)
              \left(\sum_i(\alpha_i-1)\right)\\
&=(k-t)+(k-t)\left(\sum_i\alpha_i-t\right)\\
&\le(k-t)(c_k-t+1).
\end{align*}
Here we used $m_i\ge0$ and $\alpha_i-1\ge0$ in the second line.  Since $\sum_i\alpha_i\le c_k$, we obtain
\[
|E(F)|\le(k-t)(c_k-t+1)+\frac{c_k}{2}.
\]
The left side is integral, so
\[
|E(F)|\le(k-t)(c_k-t+1)+\floor{\frac{c_k}{2}}.
\tag{30}\label{eq:Fupper}
\]
Consequently
\[
m(H)\ge
\binom{2k-t}{2}-(k-t)(c_k-t+1)-\floor{\frac{c_k}{2}}.
\tag{31}\label{eq:Hlower}
\]
Substitution in \eqref{eq:compressed} yields \eqref{eq:Psi}. The inequalities in \eqref{eq:terminal-domain} follow from $t\ge1$, $t\le\sum_i\alpha_i\le c_k$, and $E\ge0$. Taking the minimum proves the theorem.
\end{proof}

\subsection{A uniform terminal bound}

For the uniform range it is convenient to use a closed relaxation of Theorem~\ref{thm:exact-terminal}.

\begin{theorem}[Compressed terminal bound]\label{thm:coarse-terminal}
Let $G$ be $r$-critical of order $n$, with $\omega(G)\le w$. Put
\[
D=3r-n,\qquad d=\frac Dr,\qquad b=\frac wr.
\]
If $D\ge7$ and $0\le d\le2b$, then
\[
m(G)\ge r^2\Phi(d,b)-6r,
\tag{32}\label{eq:Phi-bound}
\]
where
\[
\Phi(d,b)=
\begin{cases}
\dfrac{3-bd}{2},&0\le d\le b,\\[2mm]
\dfrac{3-d^2+2bd-2b^2}{2},&b\le d\le2b.
\end{cases}
\tag{33}\label{eq:Phi}
\]
\end{theorem}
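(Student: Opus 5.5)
The plan is to lower-bound every term $\Psi(r,D,w;k,t)$ over the domain \eqref{eq:terminal-domain} by $r^2\Phi(d,b)-6r$, and then apply Theorem~\ref{thm:exact-terminal}. I will rescale $k=\kappa r$, $t=\tau r$, $E=\epsilon r$ with $\epsilon=\kappa+\tau-d\ge0$, and $c_k=\beta r$ with $\beta=\min(b,\kappa)$. Each piece of \eqref{eq:Psi} is then replaced by its quadratic leading part, and every linear or constant error is collected into the $-6r$ term.

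\textbf{Step 1: quadratic lower bounds for each piece.} For the last three terms of \eqref{eq:Psi}, $\binom{2k-t}{2}\ge (2k-t)^2/2-(2k-t)/2$ and $\floor{c_k/2}\le w/2$. Together with the $-(k-t)$ coming from the ``$+1$'', this costs at most about $3r/2$. For the jump term, Lemma~\ref{lem:jump-formula} gives a uniform bound:
\[
J_k(E)\ \ge\ \begin{cases}\dfrac{E(2k-E)}2-\dfrac{5E}{2},& E\le k,\\[2mm] \dfrac{kE}{2}-\dfrac{3k}{2},& E\ge k.\end{cases}
\]
The boundary case $E=k-1$ fits the first branch, since $\floor{(k-2)^2/2}\ge (k-1)(k+1)/2-5(k-1)/2$ up to a constant. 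Each correction is at most $\tfrac52 r$. So the total error is $O(r)$, and I will check that the constants sum to at most $6r$; these are crude estimates using $k,t,E\le 2r$.

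\textbf{Step 2: the continuous minimisation.} After dividing by $r^2$, the main part is
\[
f(\kappa,\tau)=\tfrac32(1-\kappa^2)+j(\kappa,\epsilon)+\tfrac12(2\kappa-\tau)^2-(\kappa-\tau)(\beta-\tau),
\]
where $j=\epsilon(2\kappa-\epsilon)/2$ for $\epsilon\le\kappa$ and $j=\kappa\epsilon/2$ otherwise. Without $j$, this simplifies to $\tfrac32+\tfrac12\kappa^2-\kappa\tau-\tfrac12\tau^2-\kappa\beta+\tau\beta$.

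The constraints are $0<\kappa\le1$, $0\le\tau\le\beta$, and $\epsilon=\kappa+\tau-d\ge0$. I will show $f\ge\Phi(d,b)$ on this region by the following sequence.

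1. Show that $f$ is nonincreasing in $\kappa$ along directions where $\epsilon$ is held fixed, or directly compare with $\kappa=1$. This uses $\partial_\kappa$ of the $-\tfrac32\kappa^2$ term against the growth of the others.
2. With $\kappa=1$ and $\beta=b$, optimise over $\tau\in[\max(0,d-1),b]$.

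I expect the minimiser to have $\epsilon=0$, i.e.\ $\tau=d-\kappa$ where feasible, or $\tau$ at an endpoint. The two cases $d\le b$ and $b\le d\le2b$ of \eqref{eq:Phi} should correspond to whether the constraint $\tau\le b$ (that is, $t\le c_k$) is active. For example, with $\tau=\beta=b$ and $\epsilon$ minimal, the terms $-(\kappa-\tau)(\beta-\tau)$ vanish and the remaining expression should reduce to $(3-d^2+2bd-2b^2)/2$. With $\tau=d-\kappa$ inside $[0,b]$, it should reduce to $(3-bd)/2$.

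Since $f$ is piecewise quadratic, I will handle the minimisation by checking stationary points of each piece, together with the region boundaries $\tau=0$, $\tau=\beta$, $\epsilon=0$, $\epsilon=\kappa$, $\kappa=b$, and $\kappa=1$. The hypothesis $d\le 2b$ is used to guarantee that the boundary $\epsilon=0$ meets the constraint $\tau\le b$.

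\textbf{Main obstacle.} The hard part is Step~2. The function $f$ is indefinite; the $-\tfrac12\tau^2$ and $-\kappa\tau$ terms make it concave in some directions. The case split $\beta=\min(b,\kappa)$ and the two branches of $j$ also produce several regions. So I must verify that the global minimum lies on the boundary and equals $\Phi$, rather than at some interior point.

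My first attempt is the following. Fix $\kappa$ and note that $f$ is concave in $\tau$ on each branch, because the $\tau^2$ coefficients are $-\tfrac12$ and $-\tfrac12$ from $j$. Hence the minimum over $\tau$ is at an endpoint: $\tau=\max(0,d-\kappa)$ or $\tau=\beta$. This leaves one-variable functions of $\kappa$, which I will minimise explicitly, again expecting concavity to push $\kappa$ to an endpoint ($\kappa=1$, or $\kappa=d-\beta$). If concavity in $\kappa$ fails on some piece, I will fall back on comparing the two candidate values of \eqref{eq:Phi} directly with the explicit stationary value.
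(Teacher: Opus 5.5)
\textbf{There is a genuine gap: Step~2 carries the whole theorem, you leave it as a plan, and its concrete steps are false.}

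Your Step~1 is essentially the paper's restoration of the linear terms. One slip: $\tfrac32(r(r-1)-k(k-1))$ also contributes $-\tfrac32r+\tfrac32k$. The exact omitted part is therefore $-\tfrac32r-\tfrac12k+\tfrac32t-\floor{c_k/2}$, and with the $J$-error the total is still $\ge-5r$, so $-6r$ survives.

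The problems in Step~2 are these.

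\emph{First}, the worst case is not $\kappa=1$, and $f$ is not nonincreasing in $\kappa$. For $\kappa\ge b$ one has $\beta=b$, and the coefficient of $\kappa^2$ is $+1$ on both branches. On the branch $\tau\le d$, $\partial_\kappa f=2\kappa-\tau-b\ge0$. On the branch $\tau\ge d$ (when $d\le b$), $\partial_\kappa f=2\kappa-\tfrac12\tau-b-\tfrac12 d\ge0$. Even along lines of constant $\epsilon$, the first branch has $df/d\kappa=2\kappa+\epsilon-2b\ge0$. Concretely, for $b=\tfrac12$ and $d=\tfrac3{10}$ one gets $\min_\tau f(1,\tau)=\tfrac{69}{40}$, while $f(\tfrac12,\tfrac12)=\tfrac{57}{40}=\Phi(d,b)$. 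So a bound obtained at $\kappa=1$ says nothing about smaller $\kappa$.

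\emph{Second}, your guessed extremal configurations are wrong. In both cases of \eqref{eq:Phi}, $\Phi$ is attained at $\kappa=\tau=b$, i.e.\ $k=t=w$. When $d<b$ this point lies in the linear branch $E\ge k$, not on $\epsilon=0$. On the line $\epsilon=0$ feasibility forces $\kappa\in[d/2,d]$, and you only get values $\ge(3-d^2)/2$, which exceeds $(3-bd)/2$ since $0<d<b$.

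\emph{Third}, concavity in $\tau$ holds only within each branch. The function $j$ has slope $0$ at $\epsilon=\kappa^-$ and slope $\kappa/2$ at $\epsilon=\kappa^+$, a convex kink at $\tau=d$. So restricting to $\tau\in\{\max(0,d-\kappa),\beta\}$ is unjustified. Likewise, on $\kappa\ge b$ the resulting one-variable functions are convex in $\kappa$, not concave.

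\textbf{What is needed is the actual minimisation.} The paper does it by splitting at $\kappa=b$ and treating the branches $\tau\le d$ and $\tau\ge d$ separately, with $\tau=d$ an endpoint of each.

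For $\kappa\ge b$ it substitutes $X=\kappa-b\ge0$ and $Y=b-\tau\ge0$, and writes the excess over $\Phi$ as explicit quadratics:
\begin{itemize}
\item for $d\le b$, $\tau\ge d$: $\tfrac12\bigl(2X^2+XY+X(b-d)+Y(b-Y)\bigr)$;
\item for $d\le b$, $\tau\le d$: $X^2+XY-Y^2+Y(b+p)-\tfrac12p(b+p)$, with $p=b-d$ and $Y\in[p,b]$. This is minimised at $X=0$, is then concave in $Y$, and both endpoint values equal $p(b-p)/2\ge0$;
\item for $b\le d\le2b$: $X^2+XY+Y(c_0-Y)$, with $c_0=2b-d$ and the constraint $X\ge Y-c_0$.
\end{itemize}

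For $\kappa\le b$ one has $\beta=\kappa$. This makes the branch $\tau\le d$ independent of $\kappa$, equal to $(3-d^2+2d\tau-2\tau^2)/2$. The branch $\tau\ge d$ becomes $(3-d\kappa+\kappa\tau-\tau^2)/2$. Both are concave in $\tau$ on their own intervals and are finished by endpoint checks in $\tau$ and then in $\kappa$.

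Your fallback of checking all pieces and boundaries could in principle reach this. As written, however, nothing in Step~2 is proved, and the monotonicity and concavity claims you rely on are false.
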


\begin{proof}
Apply the proof of Theorem~\ref{thm:exact-terminal} and write
\[
x=\frac kr,\qquad y=\frac tr,\qquad z=\frac Er=x+y-d,
\qquad \bar c=\min\{b,x\}.
\]
Ignoring only linear terms for the moment, the right side of \eqref{eq:Psi}, divided by $r^2$, is at least
\[
\frac32(1-x^2)+j(x,z)
+\frac{(2x-y)^2}{2}-(x-y)(\bar c-y),
\tag{34}\label{eq:quad-core}
\]
where
\[
j(x,z)=
\begin{cases}
xz-z^2/2,&y\le d,\\
xz/2,&y\ge d.
\end{cases}
\tag{35}\label{eq:j}
\]
Since $z=x+y-d$, the boundary $E=k$ is exactly $y=d$.  At equality the two expressions in \eqref{eq:j} agree, both giving $x^2/2$, so either branch is valid.  Feasibility gives $0\le y\le\min(x,b)$ and $x+y\ge d$.  The two formulas in \eqref{eq:Phi} also agree at $d=b$, where both equal $(3-b^2)/2$.

First suppose $x\ge b$. If $d\le b$ and $y\ge d$, put $X=x-b$ and $Y=b-y$. The excess of \eqref{eq:quad-core} over $(3-bd)/2$ is
\[
\frac{2X^2+XY+X(b-d)+Y(b-Y)}2\ge0.
\]
If $d\le b$ and $y\le d$, put $p=b-d$. The excess is
\[
X^2+XY-Y^2+Y(b+p)-\frac{p(b+p)}2.
\]
For fixed $Y$ it is minimized at $X=0$; as a function of $Y\in[p,b]$ it is concave, and both endpoint values equal $p(b-p)/2\ge0$.

If $b\le d\le2b$, put $c_0=2b-d$. The excess over the second line of \eqref{eq:Phi} is
\[
X^2+XY+Y(c_0-Y).
\]
The condition $x+y\ge d$ says $X\ge Y-c_0$. If $Y\le c_0$ the expression is nonnegative at $X=0$; if $Y\ge c_0$ it is minimized at $X=Y-c_0$, where it equals $(Y-c_0)^2$.

Now suppose $x\le b$. In the quadratic branch $y\le d$, and \eqref{eq:quad-core} reduces to
\[
\frac{3-d^2+2dy-2y^2}{2}.
\]
The feasible interval is
\[
\max\{0,d-x\}\le y\le\min\{x,d\}.
\]
The expression is concave in $y$, so its minimum is at an endpoint. If $x\ge d$, the two endpoint values are both $(3-d^2)/2$. If $x<d$, the endpoints are $d-x$ and $x$, and both give
\[
\frac{3-d^2+2dx-2x^2}{2}.
\]
When $d\le b$, the second case has $d/2\le x<d$; the last expression is concave in $x$ and at $x=d/2,d$ is at least $(3-bd)/2$. Thus the first line of \eqref{eq:Phi} follows. When $b\le d\le2b$, feasibility gives $d/2\le x\le b$, and the last expression decreases on this interval, so its minimum at $x=b$ is the second line of \eqref{eq:Phi}.

In the linear branch $y\ge d$, feasibility forces $d\le x\le b$, and \eqref{eq:quad-core} becomes
\[
\frac{3-dx+xy-y^2}{2}.
\]
It is concave in $y$ on $[d,x]$. Its endpoint values are $(3-d^2)/2$ and $(3-dx)/2$, both at least $(3-bd)/2$.

It remains to restore the linear terms.  Apart from the linear part of $J_k(E)$, expanding \eqref{eq:Psi} shows that the terms omitted from \eqref{eq:quad-core} are exactly
\[
-\frac32r-\frac12k+\frac32t-\floor{\frac{c_k}{2}}.
\tag{35a}\label{eq:common-linear}
\]
From \eqref{eq:Jformula}, if $E\le k$ then
\[
J_k(E)\ge kE-\frac{E^2}{2}-\frac{5E}{2}.
\]
Hence the total omitted contribution in this branch is at least
\begin{align*}
-\frac32r-\frac12k+\frac32t-\floor{\frac{c_k}{2}}-\frac52E
&\ge -\frac32r-k-\frac52E\\
&\ge -5r,
\end{align*}
where $c_k\le k$, $E\le k\le r$, and $t\ge0$ were used.  If $E\ge k$, then
\[
J_k(E)\ge\frac{kE}{2}-\frac{3k}{2},
\]
so the omitted contribution is at least
\begin{align*}
-\frac32r-2k+\frac32t-\floor{\frac{c_k}{2}}
&\ge -\frac32r-\frac52k\\
&\ge -4r.
\end{align*}
Thus in both branches the weaker uniform error $-6r$ is valid.  At $E=k$ both lower bounds may be used; the quadratic terms already agree as noted above.  Any ceiling used to pass from a real lower bound to an integral edge bound can only increase the lower bound.  This proves \eqref{eq:Phi-bound}.
\end{proof}

\section{The near range}\label{sec:near}

Write
\[
n=r+c,
\qquad
0\le c<\frac{57}{250}r.
\tag{36}\label{eq:near-c}
\]
If $c\le5$, Lemma~\ref{lem:small-excess} gives a subdivision of $K_r$ in $G$, contradicting Lemma~\ref{lem:counter-facts}.  Thus this branch proves Albertson's inequality directly; it is not being used to prove the stronger target \eqref{eq:targetZ}.

\subsection{Completion for fixed excess}

\begin{lemma}[Completion]\label{lem:completion}
Let $G$ be $r$-critical on $r+c$ vertices, where $2\le c\le r-1$. Put $N=\binom{r+c}{2}$. Then $G$ has at most $T=c^2+1$ missing edges and
\[
\crn(G)\ge
\crn(K_{r+c})-
\left(TN-\frac{T(T+1)}2\right).
\tag{37}\label{eq:completion}
\]
\end{lemma}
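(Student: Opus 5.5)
Two things need to be shown: the bound on the number of missing edges, and inequality \eqref{eq:completion}. The plan is to get the first from the Gallai-type edge bound \eqref{eq:Gallai-edge} and the second from a one-edge-at-a-time deletion argument.

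\emph{Missing edges.} For $2\le c\le r-1$ we have $n=r+c$ with $r+2\le n\le 2r-1$, so \eqref{eq:Gallai-edge} in Lemma~\ref{lem:critical-edges} applies and gives
\[
m\ge \frac{(r-1)(r+c)+c(r-c)-2}{2}.
\]
Subtracting from $N=\binom{r+c}{2}=\frac{(r+c)(r+c-1)}{2}$, the number of missing edges is at most
\[
\frac{(r+c)(r+c-1)-(r-1)(r+c)-c(r-c)+2}{2}
=\frac{c(r+c)-cr+c^2+2}{2}=c^2+1.
\]
Integrality is automatic, since the ceiling in \eqref{eq:Gallai-edge} can only make $m$ larger. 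Hence $G$ is obtained from $K_{r+c}$ by deleting some $T'\le T$ edges. When $c\le 5$ we could instead invoke Lemma~\ref{lem:small-excess}, but for the Lemma as stated we only need the counting.

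\emph{Crossing bound.} We add the missing edges back one at a time. Adding a single edge $e$ to a graph $H$ with a crossing-optimal good drawing gives
\[
\crn(H+e)\le \crn(H)+(\text{number of edges of }H\text{ that }e\text{ can be made to cross}).
\]
To make this precise, draw $e$ alongside an existing path or route it through faces. The simplest bound is that in some good drawing of $H+e$, the new edge crosses each other edge at most once and does not cross edges adjacent to it. Since $H$ has at most $N-1$ edges, this step costs at most $N-1\le N$. We need to justify that the added edge can be inserted so that it crosses each existing edge at most once. Take a drawing of $H$ and route $e$ in the dual along a shortest path from a face at one endpoint to a face at the other. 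A standard redrawing argument (removing double crossings by swapping segments) then yields a drawing of $H+e$ with at most $|E(H)|$ additional crossings.

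Summing over the $T'$ additions, the $i$-th added edge is inserted into a graph with at most $N-T'+i-1$ edges. The total increase is therefore at most
\[
\sum_{i=1}^{T'}(N-T'+i-1)\le \sum_{i=1}^{T}(N-i)=TN-\frac{T(T+1)}{2}.
\]
Here we pair the additions in reverse order, so that the $j$-th from last addition costs at most $N-j$, and we use that $TN-T(T+1)/2$ is increasing in $T$ for $T\le N$. This gives $\crn(K_{r+c})\le \crn(G)+TN-T(T+1)/2$, which is \eqref{eq:completion}.

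The main obstacle is the insertion step: we must certify that a single new edge can be added to an optimal drawing at cost at most the current number of edges. The dual shortest-path routing combined with the uncrossing argument is what we would try first. If that proves awkward, a fallback is the cruder bound obtained by drawing $e$ close to a path, which still crosses each edge boundedly often. The arithmetic in the first step is routine.
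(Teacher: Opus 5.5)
Your proposal is correct and is essentially the paper's proof. The structure is the same: the computation from \eqref{eq:Gallai-edge} gives at most $c^2+1$ missing edges, and inserting them one at a time, each at cost at most the current edge count, sums to $TN-T(T+1)/2$; this is monotone in $T$ because $T=c^2+1\le N$. The paper simply cites the insertion estimate $\crn(H+xy)\le\crn(H)+|E(H)|$ from Fox--Pach--Suk. If you want to prove it yourself, segment-swapping is not what gives the bound $|E(H)|$. What does is that, in a crossing-optimal drawing of $H$, a crossing-minimal $x$--$y$ curve cannot cross an edge $f$ twice: rerouting the curve along $f$, or $f$ along the curve, would contradict the minimality of the curve or the optimality of the drawing.
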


\begin{proof}
Gallai's edge bound \eqref{eq:Gallai-edge} gives
\[
\binom{r+c}{2}-m\le c^2+1.
\]
Let $\tau$ be the actual number of missing edges. If $xy$ is missing from a graph $H$, we use the standard edge-insertion estimate~\cite{FoxPachSuk}
\[
\crn(H+xy)\le\crn(H)+|E(H)|.
\]
Add the $\tau$ missing edges one at a time. Before the $j$th addition there are $N-\tau+j-1$ edges, so the total added cost is
\[
\tau N-\frac{\tau(\tau+1)}2.
\]
This increases with $\tau\le T$, which proves \eqref{eq:completion}.
\end{proof}

Averaging all induced $K_r$ subdrawings of $K_{r+c}$ gives
\[
\crn(K_{r+c})\ge
\frac{\binom{r+c}{4}}{\binom r4}\crn(K_r).
\tag{38}\label{eq:complete-average}
\]
We shall also use the following unconditional lower bound, obtained by averaging \eqref{eq:BP13t} over $13$--$(r-13)$ bipartitions.  Put $t=r-13$.  For a fixed crossing of $K_r$, exactly $4\binom{r-4}{11}$ of the $\binom r{13}$ choices of the $13$-vertex side retain both crossing edges in the bipartite subgraph.  Hence
\[
\crn(K_r)\ge
\frac{\binom r{13}}{4\binom{r-4}{11}}
\left(\frac{34627}{4000}(r-13)^2-18(r-13)\right)
= B(r),
\]
where
\[
B(r):=
\frac{r(r-1)(r-2)(r-3)(34627r-522151)}{2496000(r-14)}
\quad(r\ge15).
\tag{39}\label{eq:Br}
\]
Here $522151=13\cdot34627+72000$ and $2496000=4000\cdot4\cdot13\cdot12$.

\begin{lemma}\label{lem:c6-9}
If $r\ge1000$ and $6\le c\le9$, then $\crn(G)\ge\crn(K_r)$.
\end{lemma}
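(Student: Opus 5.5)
We argue by contradiction: let $G$ be an $r$-critical counterexample with $|G|=r+c$, $6\le c\le9$ and $r\ge1000$. Lemma~\ref{lem:completion} applies since $2\le c\le r-1$. Combining it with the averaging bound \eqref{eq:complete-average}, we get
\[
\crn(G)\ge \frac{\binom{r+c}{4}}{\binom r4}\crn(K_r)-\Bigl(TN-\frac{T(T+1)}2\Bigr),
\qquad T=c^2+1,\ N=\binom{r+c}{2}.
\]
It therefore suffices to show
\[
\Bigl(\frac{\binom{r+c}{4}}{\binom r4}-1\Bigr)\crn(K_r)\ \ge\ TN-\frac{T(T+1)}2 .
\]
The ratio minus one equals $\frac{4c}{r}+O(c^2/r^2)$. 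More precisely, it is at least $4c/(r-3)\ge 4c/r$, since each factor $(r+c-i)/(r-i)\ge1+c/r$. The right side is at most $(c^2+1)\binom{r+c}{2}\le 41\cdot\frac{(r+9)^2}{2}$, which is of order $r^2$.

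For the left side we bound $\crn(K_r)$ from below by $B(r)$ from \eqref{eq:Br}, valid for $r\ge15$. Here $B(r)\ge \frac{34627}{2496000}\,r^4(1-O(1/r))\approx 0.01387\,r^4$. The left side is then at least about $\frac{4c}{r}\cdot0.0138\,r^4\approx0.055\,c\,r^3$. The right side is at most about $\frac{c^2+1}{2}r^2(1+9/r)^2$. Since $c\le9$, the needed inequality reduces roughly to $0.055\,c\,r\ge (c^2+1)/2\cdot1.02$, that is, $r\gtrsim 9.3(c+1/c)\le 85$. This holds with ample room for $r\ge1000$, and it gives $\crn(G)\ge\crn(K_r)$, a contradiction.

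The only real work is making the asymptotics rigorous for all $r\ge1000$. We do this by fixing explicit crude bounds valid for $r\ge1000$:
\[
B(r)\ge \frac{34627r-522151}{2496000}\cdot\frac{r(r-1)(r-2)(r-3)}{r}\ge 0.0138\,(r-3)^3\cdot\frac{r}{1},
\]
together with $\binom{r+c}{2}\le(r+9)^2/2$. Both sides are then monotone in $r$, so the inequality holds for all $r\ge1000$ once it holds at $r=1000$, where the left side is far larger (about $10^{7}\cdot c$ against about $4\cdot10^{7}$ for $c=9$ — recheck: the left side is about $0.055\cdot9\cdot10^9\approx5\cdot10^8$, the right side is about $4.2\cdot10^7$). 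Alternatively, we treat each $c\in\{6,7,8,9\}$ separately as a polynomial inequality in $r$ with positive leading coefficient, and check its sign for $r\ge1000$ by the monotonicity of the difference. I expect no genuine obstacle; the care needed is only in choosing a clean lower bound for the ratio and for $B(r)$, so that the comparison is an elementary monotone inequality.
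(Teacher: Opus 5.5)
Your strategy is the paper's: combine the completion bound \eqref{eq:completion} with the averaging bound \eqref{eq:complete-average}, bound the gain $\bigl(\binom{r+c}{4}/\binom r4-1\bigr)\crn(K_r)$ from below using $\crn(K_r)\ge B(r)$, and compare it with the completion cost $(c^2+1)\binom{r+c}{2}$. Your asymptotic heuristic is sound, with slack of roughly a factor of ten. However, the explicit bounds you give to make it rigorous do not work as written.

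\textbf{The $B(r)$ bound.} After replacing $r-14$ by $r$ you have $B(r)\ge(34627r-522151)(r-1)(r-2)(r-3)/2496000$. The next step, $\ge 0.0138\,r(r-3)^3$, is false at $r=1000$. There $(34627r-522151)/(2496000r)\approx0.013664$, and even with the factor $(r-1)(r-2)/(r-3)^2\approx1.003$ you only reach about $0.01370<0.0138$. The step fails for every $r$ from $1000$ up to about $2300$. The factor $r/(r-14)$ you discarded is exactly what is needed. The paper keeps it, writing $(34627r-522151)/(r-14)=34627-37373/(r-14)>34589$, and deduces $B(r)>r^4/100$.

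\textbf{The cost bound.} You should have $(c^2+1)\binom{r+c}{2}\le 82\cdot\frac{(r+9)^2}{2}=41(r+9)^2$, not $41(r+9)^2/2$. Your numerical check does use the correct value.

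\textbf{The monotonicity step.} ``Both sides are monotone in $r$, so it suffices to check $r=1000$'' is not a valid inference, since two increasing functions can cross. You need monotonicity of the ratio or the difference. The paper gets this for free by dividing by $r^2$:
\begin{itemize}
\item the gain is at least $\frac{4c}{r-3}B(r)>cr^3/25$, using $\binom{r+c}{4}-\binom r4\ge c\binom r3$ (your per-factor argument only yields $4c/r$, which would also suffice);
\item the cost is below $\frac{82(1.009)^2}{2}r^2<42r^2$;
\item $cr/25\ge 6r/25>42$ for $r\ge1000$.
\end{itemize}

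With any such safe constants your argument goes through unchanged in structure.
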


\begin{proof}
For $r\ge1000$, \eqref{eq:Br} gives $B(r)>r^4/100$.  Indeed, $r-14\ge986$, so
\[
\frac{34627r-522151}{r-14}
=34627-\frac{37373}{r-14}>34627-38=34589.
\]
Also $(r-j)/r\ge997/1000$ for $j=1,2,3$, and therefore
\[
\frac{r(r-1)(r-2)(r-3)}{r^4}
\ge\frac{997^3}{1000^3}.
\]
Consequently
\[
\frac{B(r)}{r^4}>\frac{997^3}{1000^3}\frac{34589}{2496000}>\frac1{100}.
\]
Moreover,
\[
\binom{r+c}{4}-\binom r4
=\sum_{j=0}^{c-1}\binom{r+j}{3}
\ge c\binom r3,
\]
so the gain in \eqref{eq:complete-average} over $\crn(K_r)$ is at least
\[
\frac{4c}{r-3}B(r)>\frac{cr^3}{25}.
\]
On the other hand, since $c\le9$, we have $T=c^2+1\le82$ and $r+c\le1.009r$.  Dropping the negative term in \eqref{eq:completion}, the completion cost is at most
\[
T\binom{r+c}{2}
<\frac{82(1.009)^2}{2}r^2<42r^2.
\]
Since $c\ge6$ and $r\ge1000$, $cr^3/25>42r^2$. Thus \eqref{eq:completion} and \eqref{eq:complete-average} prove the claim.
\end{proof}

\subsection{Reserved routing}

For $c\ge10$ we use the weak-immersion construction of Fox, Pach and Suk~\cite{FoxPachSuk}, based on Gallai decomposition and Shannon's chromatic-index theorem~\cite{Shannon}. We need one extra observation: enough singleton factors can be left completely unused.

\begin{lemma}[Reserved routing]\label{lem:reserved}
Let $G$ be $r$-critical on $r+c\le2r-2$ vertices, where $c\ge1$, and put
\[
b=r-(c+1)-\floor{\frac{3(c+1)}2}.
\tag{40}\label{eq:breserved}
\]
If $b\ge0$, then in the Gallai--Shannon weak $K_r$ immersion the routing can be chosen so that no edge of a copy of $K_{c,b}$ is used.
\end{lemma}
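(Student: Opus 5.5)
The plan is to re-examine the Fox--Pach--Suk construction and show that its routing paths never touch the edges between the singleton factors and the non-branch vertices. Those edges then supply the reserved $K_{c,b}$.

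\textbf{Step 1: Gallai decomposition and a count of singletons.} Since $r+c\le 2r-2$, Lemma~\ref{lem:gallai-join} gives
\[
G=K_s\vee G_1\vee\cdots\vee G_p .
\]
Here $K_s$ collects the $s$ singleton factors, and each non-singleton $G_i$ is $r_i$-critical with $r_i\ge3$ and $|G_i|=r_i+c_i$, where $c_i\ge r_i-1\ge2$. Then $\sum_i c_i=c$, so $p\le\floor{c/2}$, and
\[
r-s=\sum_i r_i\le\sum_i(c_i+1)\le c+\floor{c/2}.
\]
Hence $s\ge r-c-\floor{c/2}\ge b$, because $b=r-(c+1)-\floor{3(c+1)/2}$ is smaller. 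So at least $b$ singleton vertices exist.

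\textbf{Step 2: recall the routing.} In the Gallai--Shannon immersion, each $G_i$ gets $r_i$ branch vertices. The remaining $c$ vertices, $c_i$ of them in $G_i$, are the non-branch vertices, and every singleton is a branch vertex. All edges between distinct factors are present by the join, so every branch pair from different factors, and every pair involving a singleton, is realized by a direct edge. The only pairs needing a path are the non-adjacent branch pairs $x,y$ inside the same $G_i$. For these, Shannon's theorem edge-colours the missing-edge graph inside $G_i$ with at most $\floor{3\Delta/2}$ colours. Each colour class is a matching and is routed through one vertex $z$ outside $G_i$ via the path $x$--$z$--$y$. Distinct colours use distinct $z$, so the paths are edge-disjoint.

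\textbf{Step 3: choose hubs away from the reserved singletons.} I will insist that every hub $z$ is a non-branch vertex of another factor $G_j$, never a singleton. A routing path $x$--$z$--$y$ then only uses edges between a branch vertex of some $G_i$ and a non-branch vertex of some $G_j$ with $j\neq i$. The edges to be reserved join the $c$ non-branch vertices to $b$ chosen singletons. Such edges are neither direct $K_r$ edges, since non-branch vertices are not branch vertices, nor path edges, since singletons are neither hubs nor endpoints of routed pairs. So any $b$ singletons give an unused $K_{c,b}$.

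\textbf{Main obstacle.} The hard step is checking that Fox--Pach--Suk can indeed take all hubs among the non-branch vertices of other factors. This needs enough such vertices: the number of colours for $G_i$ must be at most $c-c_i$, with appropriate branch-vertex choices making $\Delta$ small. If their original argument used singletons as hubs, I expect the slack in $b$ to pay for it. The $\floor{3(c+1)/2}$ in \eqref{eq:breserved} looks like exactly a Shannon-type bound on the number of hubs. The fallback is therefore to allow up to $\floor{3(c+1)/2}$ singletons as hubs and reserve $b$ of the remaining singletons. This still works provided $s-\floor{3(c+1)/2}\ge b$, which follows from $s\ge r-(c+1)$, i.e.\ at most $c+1$ non-singleton branch vertices. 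I would establish that bound by choosing branch vertices as in Step~1; it holds when $p=1$, and in general it requires checking the Fox--Pach--Suk count.
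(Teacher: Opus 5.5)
Your closing observation is correct: the edges from the $c$ non-branch vertices to singletons that are never hubs form an unused $K_{c,b}$. The gap lies in the routing and in the count that produces those singletons.

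\textbf{The routing.} The routing you recall in Step~2, $x$--$z$--$y$ with $z$ outside $G_i$, is not edge-disjoint from the direct $K_r$-edges when $z$ is a branch vertex. In particular, if $z$ is a singleton, the edge $xz$ is also the direct edge of the branch pair $\{x,z\}$. Step~3 avoids this by taking hubs only among non-branch vertices of other factors, but that cannot work in general. With a single non-singleton factor $G_1$ there are no such vertices ($c_1=c$). Yet $G_1$ contains no $K_{r_1}$, being $r_1$-critical with $|G_1|>r_1$, so there are pairs to route.

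The Fox--Pach--Suk routing instead detours through the non-branch part $W_i$ of the same factor:
\begin{itemize}
\item an injection $f_u$ from the non-neighbours of $u$ in $U_i$ into $N(u)\cap W_i$ exists because $\delta(G_i)\ge k_i-1$;
\item the auxiliary multigraph $H_i$ on $W_i$ joining $f_u(u')$ to $f_{u'}(u)$ has $\Delta(H_i)\le k_i$, so Shannon gives a proper edge-colouring with $\lfloor 3k_i/2\rfloor$ colours;
\item the pair $u,u'$ is routed $u,f_u(u'),p,f_{u'}(u),u'$ through the singleton $p$ of its colour.
\end{itemize}
This uses only edges between $W_i$ and hub singletons, never branch--branch edges.

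\textbf{The count.} Your fallback count is also insufficient. Requiring $s-\lfloor 3(c+1)/2\rfloor\ge b$ amounts to $K:=\sum_i r_i\le c+1$, which holds only when there is one non-singleton factor. For two $C_5$ factors one has $c=4$ and $K=6$, so $s-\lfloor 3(c+1)/2\rfloor=r-13<b=r-12$.

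The missing idea has two parts. First, a single pool $P$ of $\lfloor 3q/2\rfloor$ singletons, with $q=\max_i r_i$, serves all factors at once: the same colour $p$ used in different factors uses the disjoint edge sets between $p$ and the distinct $W_i$. Second, $q$ drops as the number $t$ of non-singleton factors grows. Every other non-singleton factor has excess at least $2$, so $q\le c-2(t-1)+1$. Together with $K\le c+t$ this gives
\[
K+\lfloor 3q/2\rfloor\le (c+1)+\lfloor 3(c+1)/2\rfloor .
\]
Hence at least $r-K-\lfloor 3q/2\rfloor\ge b$ singletons are never hubs. Your Step~1 bound $s\ge b$ alone does not give this.
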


\begin{proof}
Since $c>0$, the Gallai decomposition has at least one non-singleton factor. Let their chromaticities be $k_1,\ldots,k_t$, and put $q=\max k_i$ and $K=\sum k_i$. A non-singleton $k_i$-critical factor has excess at least $k_i-1$. Hence
\[
K\le c+t.
\tag{41}\label{eq:Kct}
\]
For a factor of chromaticity $q$, the other $t-1$ non-singleton factors each contribute excess at least $2$, so
\[
q\le c-2(t-1)+1.
\tag{42}\label{eq:qct}
\]

We use the Fox--Pach--Suk construction with one common pool of singleton routing vertices. For each non-singleton factor $G_i$, choose $k_i$ branch vertices $U_i$ and put $W_i=V(G_i)\setminus U_i$. For each $u\in U_i$, choose an injection $f_u$ from the non-neighbours of $u$ in $U_i$ to neighbours of $u$ in $W_i$, as in~\cite{FoxPachSuk}. If a nonadjacent pair $u,u'\in U_i$ satisfies $f_u(u')=f_{u'}(u)$, route it through this common vertex. For every remaining nonadjacent pair, form the auxiliary multigraph $H_i$ on $W_i$ by adding an edge between $f_u(u')$ and $f_{u'}(u)$. Since each $f_u$ is injective, $\Delta(H_i)\le k_i$. Shannon's theorem therefore gives a proper edge-colouring of $H_i$ with at most $\floor{3k_i/2}\le\floor{3q/2}$ colours.

Choose once and for all a set $P$ of $\floor{3q/2}$ singleton factors and use their vertices as the common colour set for every $H_i$. If the auxiliary edge corresponding to $u,u'$ receives colour $p\in P$, route
\[
u,\ f_u(u'),\ p,\ f_{u'}(u),\ u'.
\]
Within one factor, proper edge-colouring ensures that two auxiliary edges meeting at the same vertex of $W_i$ get different colours, while two auxiliary edges with the same colour have disjoint endpoints; hence no edge between $W_i$ and $P$ is reused. The edges from $U_i$ to $W_i$ are distinct because the maps $f_u$ are injective, exactly as in the original construction. Across distinct factors, the sets $W_i$ are disjoint, so reusing the same colour $p$ uses different edges $pW_i$. Direct branch edges and the two-edge routes use no edge between $P$ and the nonbranch sets $W_i$. Thus all immersion paths remain edge-disjoint. Reusing an internal routing vertex is allowed because this is a weak immersion.

Therefore the same pool $P$ works simultaneously for all non-singleton factors, and at least
\[
r-K-\floor{\frac{3q}{2}}
\]
singleton factors can be left completely unused by the routing. From \eqref{eq:Kct}--\eqref{eq:qct},
\[
K+\floor{\frac{3q}{2}}
\le(c+1)+\floor{\frac{3(c+1)}2},
\]
so at least $b$ singleton vertices are reserved. The $c$ nonbranch vertices in the construction are complete to all singleton factors by the join decomposition. Their edges to the reserved vertices form an unused $K_{c,b}$.
\end{proof}

Following the local smoothing construction of Fox--Pach--Suk~\cite{FoxPachSuk}, we record the loss explicitly.  Fix a crossing-minimal good drawing of $G$, and write $n=r+c$.  At a nonbranch vertex $v$, let $f(v)$ be the number of immersion paths having $v$ internally.  Edge-disjointness gives $f(v)\le d(v)/2\le n/2$, so separating the local strands creates at most
\[
\binom{f(v)}2\le\frac{n^2}{8}
\]
new crossings.  There are $c=n-r$ nonbranch vertices.  Now let $v_i$ be a branch vertex, and let $f_i$ be the number of immersion paths that pass through $v_i$ internally.  The $r-1$ immersion paths having $v_i$ as an endpoint already use $r-1$ distinct edges incident with $v_i$, while each internally passing path uses two further incident edges.  Since the immersion paths are edge-disjoint,
\[
(r-1)+2f_i\le d(v_i)\le n-1=r+c-1.
\]
Hence $2f_i\le c$, so $f_i\le c/2$.  Each such internally passing path can be routed to the cheaper side of the cyclic order at $v_i$, crossing at most $(d(v_i)-2)/2<n/2$ local strands.  Thus each branch vertex contributes at most $cn/4$ new crossings, and all $r$ branch vertices contribute at most $rcn/4$.  Altogether the number of new crossings is at most
\[
\frac{cn^2}{8}+\frac{rcn}{4}
=\frac{c(r+c)(3r+c)}8.
\]
Put
\[
L(r,c):=\frac{c(r+c)(3r+c)}8.
\tag{43}\label{eq:smoothing-loss}
\]

Let $G'$ be the union of the immersion paths, and let $B$ be the reserved copy of $K_{c,b}$.  In the fixed drawing of $G$, let $X_1$ be the number of crossings whose two edges both lie in $G'$, and let $X_2$ be the number whose two edges both lie in $B$.  Since $E(G')\cap E(B)=\varnothing$, these count disjoint sets of unordered crossing edge-pairs, and therefore
\[
\crn(G)\ge X_1+X_2.
\]
Restricting the drawing to $B$ gives $X_2\ge\crn(K_{c,b})$.  Smoothing the weak immersion inside the restricted drawing of $G'$ produces a drawing of $K_r$ with at most $X_1+L(r,c)$ crossings, hence
\[
X_1\ge\crn(K_r)-L(r,c).
\]
Consequently
\[
\crn(G)\ge \crn(K_r)-L(r,c)+\crn(K_{c,b}).
\]
Thus it is enough that
\[
\crn(K_{c,b})\ge L(r,c).
\tag{44}\label{eq:routing-suffices}
\]

\begin{proposition}[Uniform near range]\label{prop:near-tail}
If $r\ge1000$ and $n<307r/250$, then $\crn(G)\ge\crn(K_r)$.
\end{proposition}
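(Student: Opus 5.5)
We write $n=r+c$ with $0\le c<\tfrac{57}{250}r$, as in \eqref{eq:near-c}, and split on the size of $c$. The cases $c\le5$ and $6\le c\le9$ are already done. For $c\le5$, Lemma~\ref{lem:small-excess} gives a subdivision of $K_r$, contradicting Lemma~\ref{lem:counter-facts}. For $6\le c\le9$ we apply Lemma~\ref{lem:c6-9}. For larger $c$ the plan is to use two further mechanisms on overlapping windows: the completion argument of Lemma~\ref{lem:c6-9} while $c$ is small compared with $r$, and the reserved-routing criterion \eqref{eq:routing-suffices} once $c$ is moderately large.

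\emph{Completion window.} We first extend the proof of Lemma~\ref{lem:c6-9} from $c\le9$ to all $c\le\gamma r$ for a suitable constant $\gamma$; I expect $\gamma=1/15$ or so to suffice. By \eqref{eq:complete-average} and the bound $B(r)>r^4/100$, the gain is at least $\tfrac{4c}{r-3}B(r)>cr^3/25$. By \eqref{eq:completion}, the cost is at most $(c^2+1)\binom{r+c}{2}\le (c^2+1)(1+\gamma)^2r^2/2$. The inequality $cr^3/25>(c^2+1)(1+\gamma)^2r^2/2$ reduces to roughly $c<2r/(25(1+\gamma)^2)$, which holds for all $10\le c\le\gamma r$ once $\gamma$ is small. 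The hypothesis $c\le r-1$ of Lemma~\ref{lem:completion} is clear.

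\emph{Routing window.} For $c$ from some absolute constant $c_0$ (I expect around $20$) up to $57r/250$, we verify \eqref{eq:routing-suffices}. First, $n<307r/250\le 2r-2$, so Lemma~\ref{lem:reserved} applies. The reserved size satisfies
\[
b\ge r-\tfrac52(c+1)\ge r\Bigl(1-\tfrac{57}{100}\Bigr)-\tfrac52>0.42r,
\]
and more precisely $b\ge r-\tfrac52(c+1)$. Since $c\ge6$, \eqref{eq:K6average} gives
\[
\crn(K_{c,b})\ge \tfrac{c(c-1)}{5}\cdot\tfrac{(b-2)^2}{4}.
\]
So after cancelling $c$ it suffices to show
\[
\tfrac{2(c-1)}{5}\bigl(r-\tfrac52c-\tfrac92\bigr)^2\ge (r+c)(3r+c).
\]
Set $c=\gamma' r$. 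The left side is about $\tfrac25\gamma' r^3(1-\tfrac52\gamma')^2$ and the right side is $r^2(1+\gamma')(3+\gamma')$. For $\gamma'\le 57/250$ the left side is at least $\tfrac25(c-1)(0.43)^2r^2\approx0.074(c-1)r^2$, while the right side is at most about $3.97r^2$. This already works for $c\gtrsim55$. For smaller $c$, use $b\approx r$ directly: then the requirement is $\tfrac25(c-1)\ge 3(1+o(1))$, which holds for $c\ge 9$ up to the correction terms. A monotonicity or concavity check in $c$ of the ratio of the two sides on $[c_0,57r/250]$ should close the gap between these two estimates. Because $r\ge1000$, the $O(1)$ floor and rounding losses are absorbed.

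The main obstacle is making the two windows overlap with explicit constants. Completion needs $c\lesssim 2r/25$. Routing needs $c\ge c_0$ with the factor $(1-\tfrac52c/r)^2$ still large. At $r=1000$ the completion window reaches only $c\approx 70$, so routing must already work from there; the estimate above suggests it does from $c\approx 20$, but this has to be checked with the exact floors in \eqref{eq:breserved} and \eqref{eq:K6average}. I would do this by treating the routing inequality as a polynomial in $(r,c)$ on the region $r\ge1000$, $20\le c\le 57r/250$, and checking positivity on the boundary lines $c=20$ and $c=57r/250$ together with concavity in $c$.
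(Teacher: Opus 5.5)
Your proposal is correct and follows the paper's own argument. You dispose of $c\le 9$ the same way, and your routing inequality is exactly the paper's $S(r,c)=(c-1)(2r-5c-9)^2-10(r+c)(3r+c)\ge0$. Your concavity-plus-endpoints plan succeeds, since $\partial^2S/\partial c^2=150c-40r+110<0$ for $c\le 57r/250$. The one difference is that the paper starts routing already at $c=10$, where $S(r,10)=6r^2-2524r+30329>0$ for $r\ge1000$. So your completion window, though valid, is unnecessary, and no separate check of the exact floors is needed once you pass to $S$.
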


\begin{proof}
The cases $c\le5$ follow from Lemma~\ref{lem:small-excess}, and $6\le c\le9$ from Lemma~\ref{lem:c6-9}. Assume
\[
10\le c<\frac{57}{250}r.
\]
For $b$ in \eqref{eq:breserved},
\[
b\ge r-\frac52(c+1)>\frac{43}{100}r-\frac52.
\]
For $r\ge1000$ the right side is at least $427.5$, and $b$ is integral; hence $b\ge428$. Thus $b\ge2$ and Lemma~\ref{lem:reserved} applies. Equation~\eqref{eq:K6average} gives
\[
\crn(K_{c,b})\ge
\frac{c(c-1)}5
\floor{\frac b2}\floor{\frac{b-1}{2}}.
\]
For $b\ge2$,
\[
\floor{\frac b2}\floor{\frac{b-1}{2}}\ge\frac{(b-2)^2}{4},
\]
and
\[
b-2\ge r-\frac{5c+9}{2}.
\]
Thus \eqref{eq:routing-suffices} follows from
\[
S(r,c):=(c-1)(2r-5c-9)^2-10(r+c)(3r+c)\ge0.
\tag{45}\label{eq:S}
\]
Treat $c$ as real on $[10,57r/250]$. We have
\[
\frac{\partial S}{\partial c}
=75c^2-40cr+110c+4r^2-56r-9.
\]
Direct substitution and simplification at the two endpoints give, respectively,
\[
4r^2-456r+8591>0
\]
and
\[
-\frac{3053r^2+77300r+22500}{2500}<0.
\]
As a function of $c$, $\partial S/\partial c$ is an upward-opening quadratic. It is positive at the left endpoint and negative at the right endpoint, so its smaller zero lies inside the interval and its larger zero lies to the right. Hence $S$ increases and then decreases on the interval, and its minimum is attained at an endpoint. Now
\[
S(r,10)=6r^2-2524r+30329>0
\]
for $r\ge1000$, while
\[
S\left(r,\frac{57r}{250}\right)
=
\frac{105393r^3-27443050r^2+21217500r-50625000}{625000}>0.
\]
For the latter, the numerator is positive at $r=1000$, and its derivative
\[
316179r^2-54886100r+21217500
\]
is positive and increasing thereon. Hence \eqref{eq:S} holds, and the reserved crossings pay for all smoothing crossings in \eqref{eq:smoothing-loss}.
\end{proof}

\subsection{The finite near check}

It remains to settle \eqref{eq:near-c} for $19\le r\le999$. The certificate checker treats every integer pair $(r,c)$ with
\[
19\le r\le999,
\qquad
0\le c\le\floor{\frac{57r-1}{250}}.
\tag{46}\label{eq:finite-near-domain}
\]
The first three tests are direct consequences of the preceding lemmas:
\begin{enumerate}[label=(\roman*)]
\item $c\le5$ is discharged directly by the $K_r$-subdivision of Lemma~\ref{lem:small-excess}, contradicting Lemma~\ref{lem:counter-facts};
\item completion is tested using Lemma~\ref{lem:completion}, \eqref{eq:complete-average}, and the complete lower bounds $C_r$;
\item reserved routing is tested using \eqref{eq:K6average}, and for $c\ge13$ also the Brosch--Polak bound \eqref{eq:BP13t} averaged over the $c$-side.
\end{enumerate}
A fourth test uses $M_0(r,r+c)$ and sampled \eqref{eq:BK5}: if its resulting upper bound on $m$ under \eqref{eq:counter} is smaller than $M_0$, the pair is impossible.

For the residual pairs we record an exact rational Farkas certificate. We describe the system, so that the machine check has a fixed mathematical meaning.

By Lemma~\ref{lem:gallai-join}, an $r$-critical graph on $r+c$ vertices has at least
\[
s:=r-\floor{\frac{3c}{2}}
\tag{47}\label{eq:s-universal}
\]
universal singleton factors. Indeed, if the non-singleton factors have chromatic sum $K$ and there are $t$ of them, then $c\ge K-t$ and $K\ge3t$, whence $K\le3c/2$. Choose $s$ singleton factors and write
\[
G=K_s\vee H,
\qquad
k:=r-s=\floor{\frac{3c}{2}},
\qquad
h:=|H|=r+c-s.
\tag{48}\label{eq:join-H}
\]
Then $H$ is $k$-critical: $\chi(H)=r-s=k$, and if a proper subgraph $H'\subsetneq H$ had $\chi(H')\ge k$, then $K_s\vee H'$ would be a proper subgraph of the $r$-critical graph $G$ with chromatic number at least $r$.

Fix a good drawing of $G$. For $j=0,\ldots,4$, let $x_j$ be the number of crossings whose four endpoints contain exactly $j$ vertices of $K_s$. Also put
\[
e_S=\binom s2,
\qquad
b=sh,
\qquad
e_H=|E(H)|.
\]
Thus
\[
\crn(G)=x_0+x_1+x_2+x_3+x_4.
\tag{49}\label{eq:xsum}
\]
Choose $u$ vertices from $K_s$ and $v$ vertices from $H$. A crossing of type $j$ survives with probability
\[
p_j(u,v)=
\frac{\binom uj}{\binom sj}
\frac{\binom v{4-j}}{\binom h{4-j}}.
\tag{50}\label{eq:pj-near}
\]
Applying \eqref{eq:BK4} and the planar bound $\crn(X)\ge |E(X)|-3(|X|-2)$ to the random induced subgraph gives linear inequalities in
\[
z=(x_0,x_1,x_2,x_3,x_4,e_S,b,e_H).
\]
The exact identities $e_S=\binom s2$ and $b=sh$, and the bounds
\[
M_0(k,h)\le e_H\le\binom h2
\]
are added. In a good drawing any four vertices support at most three crossings, so we also use
\[
x_j\le 3\binom{s}{j}\binom{h}{4-j}\qquad(0\le j\le4).
\]
Finally, fix $1\le a<s$ and choose uniformly an $a$-set $A\subset K_s$.  The edges between $A$ and $V(G)\setminus A$ form a $K_{a,n-a}$.  The expected number of crossings in this bipartite subdrawing is bounded above by
\[
\rho_2x_2+\rho_3x_3+\rho_4x_4,
\]
where
\[
\rho_2=\frac{a(a-1)}{s(s-1)},\qquad
\rho_3=\frac{2a(a-1)(s-a)}{s(s-1)(s-2)},\qquad
\rho_4=\frac{4a(a-1)(s-a)(s-a-1)}{s(s-1)(s-2)(s-3)}.
\]
These coefficients are chosen in the safe direction, as upper bounds on the survival probability over the possible incidence patterns of the two crossing edges.  For example, for a type-$2$ crossing, if the two clique endpoints lie on distinct crossing edges, both must belong to $A$, giving probability $(a)_2/(s)_2=\rho_2$; if they lie on the same crossing edge, the other crossing edge lies wholly in $H$ and the crossing does not occur in the bipartite subdrawing.  Thus $\rho_2$ is an upper bound in either pattern.  Since every such bipartite drawing has at least $\crn(K_{a,n-a})$ crossings, this gives a valid lower row in $x_2,x_3,x_4$.  Averaging the exact $K_{d,n-a}$ values for $3\le d\le6$ gives the lower bound used by the verifier.  All sampled BK rows use the integer-rounded form such as \eqref{eq:BK5-int}.

Every row is written as
\[
a_i\cdot z\ge b_i,
\qquad z\ge0.
\tag{51}\label{eq:rows-near}
\]
A certificate consists of nonnegative rational numbers $\lambda_i$ satisfying
\[
\sum_i\lambda_i a_i\le(1,1,1,1,1,0,0,0)
\tag{52}\label{eq:farkas-coeff}
\]
coordinatewise and
\[
\sum_i\lambda_i b_i\ge\ZZ(r).
\tag{53}\label{eq:farkas-value}
\]
Indeed, using \eqref{eq:rows-near}, $\lambda_i\ge0$, and then \eqref{eq:farkas-coeff},
\begin{align*}
\sum_i\lambda_i b_i
&\le \sum_i\lambda_i(a_i\cdot z)\\
&=\left(\sum_i\lambda_i a_i\right)\!\cdot z\\
&\le x_0+x_1+x_2+x_3+x_4\\
&=\crn(G).
\end{align*}
Together with \eqref{eq:farkas-value}, this gives $\crn(G)\ge\ZZ(r)$.

\begin{proposition}[Finite near verification]\label{prop:finite-near}
For every integer pair in \eqref{eq:finite-near-domain}, one of the four direct tests above succeeds or the residual system \eqref{eq:rows-near} has a certificate satisfying \eqref{eq:farkas-coeff}--\eqref{eq:farkas-value}. Consequently Albertson's inequality holds throughout the finite near range $19\le r\le999$.
\end{proposition}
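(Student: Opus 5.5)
The proposition has two layers: a finite computational claim and a logical consequence. I would first establish the consequence, namely that for each pair $(r,c)$ in \eqref{eq:finite-near-domain}, success of any one test rules out an $r$-critical counterexample on $r+c$ vertices. Then I would describe how the machine check covers every pair.

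\emph{Soundness of each test.} Suppose $G$ is an $r$-critical counterexample with $19\le r\le 999$ and $n=r+c<307r/250$. This is the only near-range situation left by Lemma~\ref{lem:cranston}, and it forces $c\le\floor{(57r-1)/250}$.
\begin{enumerate}[label=(\roman*)]
\item If $c\le5$, Lemma~\ref{lem:small-excess} gives a $K_r$-subdivision, contradicting Lemma~\ref{lem:counter-facts}.
\item For completion, Lemma~\ref{lem:completion} applies because $c\ge 2$ and $c\le r-1$. Combining it with \eqref{eq:complete-average} and $\crn(K_r)\ge C_r$ from \eqref{eq:Cq-valid} gives $\crn(G)\ge\crn(K_r)$ whenever the computed quantity is nonnegative. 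The paper's $C_r$ needs $r\ge13$, which holds here.
\item For reserved routing, the inequality derived before \eqref{eq:routing-suffices} gives $\crn(G)\ge\crn(K_r)$ as soon as $b\ge0$ in \eqref{eq:breserved} and a valid lower bound for $\crn(K_{c,b})$ is at least $L(r,c)$. The lower bound comes from \eqref{eq:K6average}, or, for $c\ge13$, from averaging \eqref{eq:BP13t} over $13$-subsets of the $c$-side. I would also check that Lemma~\ref{lem:reserved} applies, i.e.\ $r+c\le 2r-2$; this holds because $c<57r/250$.
\item For the fourth test, $m\ge M_0(r,r+c)$ by Lemma~\ref{lem:critical-edges} together with Lemma~\ref{lem:counter-facts}. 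Lemma~\ref{lem:sampling} applied with \eqref{eq:BK5} (sample size $s\le n$) and $\crn(G)\le \ZZ(r)-1$ gives an upper bound on $m$. If that upper bound falls below $M_0$, this is a contradiction.
\end{enumerate}

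\emph{Soundness of the Farkas residual.} I would verify that every row of \eqref{eq:rows-near} holds for the vector $z$ attached to any good crossing-minimal drawing of $G=K_s\vee H$, with $s$ as in \eqref{eq:s-universal}. The points to check are as follows.
\begin{itemize}
\item The sampled BK and planar rows follow by taking expectations over a random induced subgraph with $u$ clique and $v$ $H$-vertices. Edge survival probabilities are exact, and crossing survival probabilities are $p_j$ by \eqref{eq:pj-near}, since each crossing has four distinct endpoints.
\item The bounds on $e_H$ use that $H$ is $k$-critical and $K_k$-subdivision-free, as remarked after Lemma~\ref{lem:critical-edges}.
\item The bounds on the $x_j$ come from goodness of the drawing.
\item The bipartite rows use the upper coefficients $\rho_j$, whose safety must be checked case by case for the incidence patterns of types $3$ and $4$, as was done for type $2$. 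They also use Kleitman's formula \eqref{eq:Kleitman}, averaged.
\end{itemize}
Given all rows, the displayed Farkas chain yields $\crn(G)\ge\ZZ(r)>\crn(G)$, contradicting \eqref{eq:counter}.

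\emph{The computation.} The computational content is that, for each of the finitely many pairs, the script \texttt{verify.py} either confirms one of the tests in exact arithmetic or reads the stored $\lambda_i$ and checks \eqref{eq:farkas-coeff}--\eqref{eq:farkas-value} with rationals. I regard this as a finite verification to be cited, not reproduced. The main obstacle is not this arithmetic but the correctness of the row system itself: its entries must be exactly the inequalities proved above, with the safe rounding direction. The most delicate part is the $\rho_3,\rho_4$ coefficients. To settle these, I would enumerate, for type $3$ and type $4$ crossings, which of the two crossing edges contain clique endpoints. In each configuration I would compute the probability that both edges join $A$ to its complement, and confirm that it is at most the stated $\rho_j$. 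Once this is settled, every pair with $19\le r\le999$ in the near range is excluded, which proves the proposition.
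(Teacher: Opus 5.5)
Your proposal is correct and follows the paper's route. The paper's proof is just the exact machine check: every pair in the domain is reconstructed, the direct tests are run, and the $1311$ residual Farkas certificates are verified in rational arithmetic, while the soundness of the four tests and of the residual rows comes from the text preceding the proposition, as you reconstruct it. The point you flag as delicate is in fact immediate: a type-$3$ crossing must have one crossing edge inside $K_s$ and the other with exactly one clique endpoint, and a type-$4$ crossing has both edges inside $K_s$, so each type has a single incidence pattern and the stated $\rho_3,\rho_4$ are exactly the survival probabilities.
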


\begin{proof}
This is the exact finite certificate \texttt{cert/near\_residual.bin}. The verifier constructs every pair in \eqref{eq:finite-near-domain} itself, performs the direct tests, reconstructs every row \eqref{eq:rows-near} for each residual pair, and checks \eqref{eq:farkas-coeff}--\eqref{eq:farkas-value} in rational arithmetic. There are $1311$ residual certificates. The verifier also requires that every residual pair has exactly one used certificate and that no certificate is unused.
\end{proof}

Combining Proposition~\ref{prop:near-tail} and Proposition~\ref{prop:finite-near} proves the near range for every $r\ge19$.

\section{The middle range}\label{sec:middle}

We now assume
\[
\ceil{\frac{221r}{125}}\le n\le\floor{\frac{141r}{50}}.
\tag{54}\label{eq:middle-domain}
\]
The proof iteratively excludes large cliques. Each exclusion strengthens Theorems~\ref{thm:exact-terminal} and \ref{thm:coarse-terminal}.

\subsection{Clique exclusion}

Assume $\omega(G)\ge q$ and fix a $q$-clique $Q$. Put $t=n-q$. Suppose $m\ge M$ and $\delta(G)\ge r-1$. Throughout this sampling step we require
\[
q\ge15,\qquad t\ge4,\qquad 0\le u\le t,\qquad 0\le v\le q,
\qquad u+v>2.
\]
Choose uniformly $u$ vertices of $V(G)\setminus Q$ and $v$ vertices of $Q$. Define
\[
A:=\frac{\binom u2}{\binom t2},
\qquad
B:=\frac{uv}{qt}.
\tag{55}\label{eq:AB}
\]
We use choices with
\[
A\ge B,
\qquad
2B\ge A.
\tag{56}\label{eq:ABcond}
\]
Let $x$ be the number of edges outside $Q$ and $y$ the number of edges between $Q$ and its complement. Then
\[
2x+y\ge t(r-1),
\qquad
x+y\ge M-\binom q2.
\]
Under \eqref{eq:ABcond}, the expected number of edges in the sample is at least
\[
E_0:=\binom v2+(A-B)t(r-1)
 +(2B-A)\left(M-\binom q2\right).
\tag{57}\label{eq:E0}
\]

Let $P_j$ be the probability that a crossing with exactly $j$ endpoints outside $Q$ survives:
\[
P_j=\frac{(u)_j(v)_{4-j}}{(t)_j(q)_{4-j}},
\tag{58}\label{eq:Pj}
\]
where $(a)_j=a(a-1)\cdots(a-j+1)$. Put $P=\max_jP_j$ and $P_0=P_{0}$. Since a drawing induced on $Q$ has at least $C_q$ crossings, the expected number of sampled crossings is at most
\[
P\bigl(\ZZ(r)-1-C_q\bigr)+P_0C_q
\tag{59}\label{eq:sample-upper-cap}
\]
under the counterexample assumption \eqref{eq:counter}.

For the sampled graph define the two expected lower bounds
\begin{align}
L_5&:=5E_0-\floor{\frac{203}{9}(u+v-2)},\notag\\
L_{37}&:=\frac{37}{9}E_0-\frac{155}{9}(u+v-2).
\tag{60}\label{eq:cap-lower}
\end{align}
They come from \eqref{eq:BK5-int} and \eqref{eq:BK37}, respectively.

\begin{lemma}[Clique cap]\label{lem:clique-cap}
With the notation above, if either $L_5$ or $L_{37}$ is strictly larger than
\[
P\bigl(\ZZ(r)-1-C_q\bigr)+P_0C_q,
\tag{61}\label{eq:cap}
\]
then $\omega(G)<q$.
\end{lemma}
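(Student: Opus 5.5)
We argue by contradiction. Suppose $\omega(G)\ge q$, fix a $q$-clique $Q$, and take a crossing-minimal good drawing of $G$, which by \eqref{eq:counter} has at most $\ZZ(r)-1$ crossings. Draw a random sample $R$ consisting of $u$ uniformly chosen vertices of $V(G)\setminus Q$ and $v$ uniformly chosen vertices of $Q$. Let $G[R]$ carry the induced subdrawing. We compare a lower and an upper bound for $\mathbb E[\#\text{crossings in the subdrawing}]$. Since $u+v>2$, the inequalities \eqref{eq:BK5-int} and \eqref{eq:BK37} apply pointwise to $G[R]$. Since $u+v$ is deterministic, taking expectations gives the expected crossing count at least $5\,\mathbb E|E(G[R])|-\floor{\frac{203}{9}(u+v-2)}$, respectively at least $\frac{37}{9}\mathbb E|E(G[R])|-\frac{155}{9}(u+v-2)$.

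\textbf{Edge expectation.} An edge inside $Q$ survives with probability $\binom v2/\binom q2$, so these edges contribute exactly $\binom v2$. An edge outside $Q$ survives with probability $A$. An edge between $Q$ and its complement survives with probability $B$. Hence
\[
\mathbb E|E(G[R])|=\binom v2+Ax+By=\binom v2+(A-B)x+(2B-A)\tfrac12\cdot 2(x+y)\cdot\tfrac12\ \text{(regrouped)},
\]
that is, $Ax+By=(A-B)(2x+y)+(2B-A)(x+y)$. This identity holds because the coefficient of $x$ is $2A-2B+2B-A=A$ and the coefficient of $y$ is $A-B+2B-A=B$. By \eqref{eq:ABcond} both coefficients $A-B$ and $2B-A$ are nonnegative. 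We then insert $2x+y\ge t(r-1)$ and $x+y\ge M-\binom q2$. The first holds because every vertex outside $Q$ has degree at least $r-1$ and degree sum $2x+y$. The second holds because $m\ge M$ and $Q$ contains $\binom q2$ edges. This gives $\mathbb E|E(G[R])|\ge E_0$, and therefore $\mathbb E[\text{crossings}]\ge L_5$ and $\ge L_{37}$, since both bounds have positive coefficient on the edge count.

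\textbf{Crossing expectation.} In a good drawing every crossing has four distinct endpoints, say $j$ of them outside $Q$. It survives exactly when all four endpoints are sampled. Since the two sides are sampled independently and uniformly, this has probability $P_j$ as in \eqref{eq:Pj}. Let $X_0$ be the number of crossings with all endpoints in $Q$; these are exactly the crossings of the induced drawing of $K_q$. Let $X'$ be the number of remaining crossings. Then $X_0+X'\le\ZZ(r)-1$, so the expectation is at most $P_0X_0+PX'\le P_0X_0+P(\ZZ(r)-1-X_0)=PX'_{\max}+(P_0-P)X_0$. Since $P_0\le P$, this is nonincreasing in $X_0$. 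Using $X_0\ge\crn(K_q)\ge C_q$ from \eqref{eq:Cq-valid}, which needs $q\ge13$ and is satisfied since $q\ge15$, we bound it by $P(\ZZ(r)-1-C_q)+P_0C_q$. The hypotheses $t\ge4$ and $u\le t$, $v\le q$ ensure the $P_j$ are well defined, with $(t)_j$ nonzero.

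Combining the two bounds contradicts the assumed strict inequality, so $\omega(G)<q$. The only delicate step is the monotonicity in $X_0$, which needs $P_0\le P$; this holds because $P$ is defined as the maximum over $j$. The nonnegativity of the regrouping coefficients is exactly \eqref{eq:ABcond}.
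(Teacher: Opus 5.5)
Your proposal is correct and follows the paper's argument. The paper's proof is a three-line version that relies on the edge bound $E_0$ and the crossing upper bound \eqref{eq:sample-upper-cap} from the setup text; you re-derive both explicitly, via the regrouping $Ax+By=(A-B)(2x+y)+(2B-A)(x+y)$ and the monotonicity in $X_0$ from $P_0\le P$. Two cosmetic fixes: delete the garbled first display in your edge computation in favour of the identity that follows it, and write $P(\ZZ(r)-1)+(P_0-P)X_0$ instead of $PX'_{\max}+(P_0-P)X_0$.
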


\begin{proof}
Because $u+v>2$, the chosen B\"ungener--Kaufmann inequality applies to every sampled graph. Taking expectations gives the corresponding lower bound $L_5$ or $L_{37}$, while \eqref{eq:sample-upper-cap} is an upper bound under the counterexample assumption. A strict reverse inequality is impossible.
\end{proof}

After a successful cap $\omega(G)<q$ we set $w=q-1$ and recompute $M$ as the maximum of the applicable edge lower bounds: Theorem~\ref{thm:exact-terminal} is used only when $D\ge7$, and Theorem~\ref{thm:coarse-terminal} only when $D\ge7$ and $D\le2w$.  We then repeat.

\subsection{The finite middle certificate}

For $19\le r\le999$, the verifier enumerates every integer $n$ in \eqref{eq:middle-domain}. Here $n>r$, so an $r$-critical graph cannot contain $K_r$ as a proper subgraph; hence $\omega(G)\le r-1$. For each pair the verifier begins with the maximum of the applicable edge lower bounds. The bound $M_0(r,n)$ is always included. The rounded bound of Theorem~\ref{thm:coarse-terminal} is included only when its hypotheses $D\ge7$ and $D\le2w$ hold, initially with $w=r-1$.  For $r<98$, the exact value $T(r,n,w)$ from Theorem~\ref{thm:exact-terminal} is also included whenever $D\ge7$.  After each successful clique cap the verifier replaces $w$ by $q-1$ and again takes the maximum of precisely those bounds whose hypotheses remain valid.

A finite certificate record gives a sequence of clique caps. Each proposed cap has
\[
q=\ceil{\frac{pr}{100}}
\tag{62}\label{eq:qpercent}
\]
for an integer $p$, together with integers $u,v$. The finite certificate uses the BK5 form $L_5$. The verifier checks the sampling domain, \eqref{eq:ABcond}, and the strict inequality of Lemma~\ref{lem:clique-cap} exactly, replaces $w$ by $q-1$, and recomputes the terminal edge bound. After the final cap, one of the sampled inequalities \eqref{eq:BK5} or \eqref{eq:BK37}, through Lemma~\ref{lem:sampling}, is required to be strictly larger than $\ZZ(r)-1$.  Since $\crn(G)$ is integral, this implies $\crn(G)\ge\ZZ(r)$.  All floors and ceilings in the certificate are evaluated before the final comparison, so no strict inequality is weakened by rounding.

Explicitly, with a sample of order $s$, the final two possible inequalities are
\begin{align}
\crn(G)&\ge
5M\frac{(n-2)(n-3)}{(s-2)(s-3)}
-\floor{\frac{203}{9}(s-2)}
 \frac{n(n-1)(n-2)(n-3)}{s(s-1)(s-2)(s-3)},
\tag{63}\label{eq:final5}\\
\crn(G)&\ge
\frac{37}{9}M\frac{(n-2)(n-3)}{(s-2)(s-3)}
-\frac{155n(n-1)(n-2)(n-3)}{9s(s-1)(s-3)}.
\tag{64}\label{eq:final37}
\end{align}

\begin{proposition}[Finite middle verification]\label{prop:finite-middle}
For every
\[
19\le r\le999,
\qquad
\ceil{\frac{221r}{125}}\le n\le\floor{\frac{141r}{50}},
\]
there is a finite sequence of valid clique caps followed by a valid final inequality \eqref{eq:final5} or \eqref{eq:final37}. Hence $\crn(G)\ge\ZZ(r)$ throughout this range.
\end{proposition}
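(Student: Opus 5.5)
The proposition is a finite statement whose proof is a certified computation. I would therefore organise the argument as two parts: first, a check that every step the verifier performs is a valid mathematical inference; second, the exact enumeration itself.

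\emph{Validity of each step.} For a fixed pair $(r,n)$, suppose $G$ is an $r$-critical counterexample of order $n$. Lemma~\ref{lem:counter-facts} then gives $\delta(G)\ge r-1$, $\crn(G)\le\ZZ(r)-1$, and that $G$ contains no subdivision of $K_r$. In particular \eqref{eq:KS} applies, so $m\ge M_0(r,n)$. Since $n>r$ and $G$ is critical, $\omega(G)\le r-1$, so we start with $w=r-1$.

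We then maintain a current lower bound $M$. It is the maximum of $M_0$, the rounded bound \eqref{eq:Phi-bound} (used only when $D=3r-n\ge7$ and $D\le 2w$), and, for $r<98$, the exact bound $T(r,n,w)$ of Theorem~\ref{thm:exact-terminal}. Each of these is a valid lower bound on $m$ under the current hypothesis $\omega(G)\le w$.

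Given a proposed cap $q=\lceil pr/100\rceil$ with sample sizes $u,v$, we check the following:
\begin{itemize}[label=--]
\item the sampling domain: $q\ge15$, $t=n-q\ge4$, $u+v>2$;
\item the conditions \eqref{eq:ABcond};
\item the strict inequality $L_5>P(\ZZ(r)-1-C_q)+P_0C_q$.
\end{itemize}
If all three hold, Lemma~\ref{lem:clique-cap} gives $\omega(G)<q$. We may therefore set $w=q-1$ and recompute $M$; the recomputed bound is still valid because the hypotheses of the relevant theorems have only become stronger.

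After the last cap, Lemma~\ref{lem:sampling}, applied to \eqref{eq:BK5-int} or \eqref{eq:BK37} with sample order $s$, gives \eqref{eq:final5} or \eqref{eq:final37}. If the right-hand side exceeds $\ZZ(r)-1$, this contradicts $\crn(G)\le\ZZ(r)-1$ by integrality. Hence no counterexample exists for that pair, and \eqref{eq:targetZ} holds. Throughout, every floor and ceiling is evaluated in the direction that weakens the inequality being certified, and all comparisons use exact rationals. This makes the strict inequalities genuinely strict.

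\emph{The computation.} For each $r$ with $19\le r\le999$ and each $n$ in \eqref{eq:middle-domain}, the certificate file supplies a list of triples $(p,u,v)$ and a final choice of inequality and sample order $s$. The verifier recomputes the following exactly:
\begin{itemize}[label=--]
\item the quantities $C_q$ from \eqref{eq:Cq};
\item the bound $T$, by minimising \eqref{eq:Psi} over the finite domain \eqref{eq:terminal-domain}, using \eqref{eq:Jformula} for $J_k$;
\item the quantities $\ZZ(r)$, $P_j$, $A$, $B$ and $E_0$.
\end{itemize}
It then checks every condition listed above.

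Finding the certificates is a heuristic search, and only their verification matters for the proof. A natural search strategy is to lower $p$ greedily, at each stage picking $u,v$ on a grid so that the cap inequality holds with the largest margin, and then to try all $s\le n$ in the final step.

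\emph{Main obstacle.} I expect the main difficulty to lie near the lower end of the interval, $n\approx 1.77r$, for moderate $r$. There $D$ is close to $1.23r$, so the terminal bound needs $w$ to be quite small before $\Phi$ pushes $m$ above the threshold at which \eqref{eq:final5} beats $\ZZ(r)-1$. At the same time, the clique caps weaken as $q$ decreases, because $C_q$ shrinks like $q^4$.

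If the greedy sequence stalls, I would first switch the final step between \eqref{eq:final5} and \eqref{eq:final37}, since \eqref{eq:final37} is better at larger sample orders. For $r<98$ I would rely on the exact $T$ rather than the relaxation $\Phi$, whose $-6r$ error term matters most when $r$ is small. If gaps remain, they would have to be closed by refining the choice of $(u,v)$ for each pair individually.
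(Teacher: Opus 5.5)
Your proposal matches the paper's proof. The paper establishes the proposition by an exact certificate, \texttt{cert/middle\_19\_999.rle}, and its verifier does what you describe: it reconstructs every pair $(r,n)$ in the domain, recomputes $M_0$ together with only those terminal bounds whose hypotheses hold at each stage ($D\ge7$, and also $D\le2w$ for Theorem~\ref{thm:coarse-terminal}), checks each clique cap through Lemma~\ref{lem:clique-cap} using $L_5$, and checks the final inequality \eqref{eq:final5} or \eqref{eq:final37} strictly against $\ZZ(r)-1$ in exact arithmetic. The one thing your write-up leaves open is whether the search actually succeeds on all $525307$ pairs, and that is exactly what the paper settles by supplying and machine-checking the certificate.
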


\begin{proof}
The certificate \texttt{cert/middle\_19\_999.rle} is run-length encoded only for storage. The verifier independently reconstructs the full domain in the statement and checks the record count for each $r$.  At every stage it recomputes $M_0$ and every terminal bound whose hypotheses hold, exactly as described above; in particular it never evaluates Theorem~\ref{thm:coarse-terminal} when $D<7$ or $D>2w$, and it never evaluates Theorem~\ref{thm:exact-terminal} when $D<7$. It then checks every cap by Lemma~\ref{lem:clique-cap} and every final inequality by exact integer cross-multiplication. The domain contains $525307$ pairs and the certificate uses $6877657$ successful cap transitions.
\end{proof}

\subsection{The uniform middle certificate}

It remains to prove \eqref{eq:middle-domain} for $r\ge1000$. Put
\[
\alpha:=\frac nr,
\qquad
x:=\frac1r,
\qquad
\frac{221}{125}\le\alpha\le\frac{141}{50},
\qquad
0<x\le\frac1{1000}.
\tag{65}\label{eq:alpha-x}
\]
We normalize edge counts by $r^2$ and crossing counts by $r^4$.

The elementary initial lower bound is
\[
\frac{m}{r^2}\ge M_0^*(\alpha,x):=
\begin{cases}
\dfrac{-\alpha^2+4\alpha-2-\alpha x-2x^2}{2},
&\alpha\le2-x,\\[2mm]
\dfrac{\alpha(1-x)}2,&\alpha>2-x.
\end{cases}
\tag{66}\label{eq:M0star}
\]
The first line is \eqref{eq:Gallai-edge}; the second is the minimum-degree bound $2m\ge n(r-1)$.

If a clique cap proves $\omega(G)<\ceil{\beta r}$, put $\theta=w/r=(\ceil{\beta r}-1)/r<\beta$. Whenever $D\le2w$, Theorem~\ref{thm:coarse-terminal} gives
\[
\frac{m}{r^2}\ge \Phi(3-\alpha,\theta)-6x.
\]
For fixed $d$, the function $b\mapsto\Phi(d,b)$ is nonincreasing on its domain $d\le2b$: its derivative is $-d/2$ when $d\le b$ and $d-2b\le0$ when $b\le d\le2b$. Therefore $\theta<\beta$ implies
\[
\frac{m}{r^2}\ge \Phi(3-\alpha,\beta)-6x.
\tag{67}\label{eq:uniform-terminal}
\]
The dependence of $q=\ceil{\beta r}$ on $x=1/r$ is kept rather than rounded away. From $q-1<\beta r\le q$ we have
\[
\beta-x\le\theta<\beta,
\qquad
\frac qr=\theta+x.
\tag{67a}\label{eq:theta}
\]
Hence, for $j\ge1$,
\[
\frac{q-j}{r}=\theta-(j-1)x,
\]
and for the complementary side
\[
\frac{n-q-j}{r}=\alpha-\theta-(j+1)x.
\tag{67b}\label{eq:q-factors}
\]
These are the factors that occur in the normalized falling-factorial probabilities in \eqref{eq:Pj}.  On a box $\alpha\in I=[a,b]$ and $0\le x\le x_0:=1/1000$, the verifier therefore encloses them by
\[
\frac qr\in[\beta,\beta+x_0],\qquad
\frac{q-j}{r}\in[\beta-jx_0,\beta]
\quad(j\ge1),
\]
and
\[
\frac{n-q-j}{r}\in[a-\beta-(j+1)x_0,\ b-\beta].
\tag{67c}\label{eq:q-factor-intervals}
\]
The same factors are used in the complete-graph lower bound.  For example, writing $t=q-13$, the rational quantity inside \eqref{eq:Bq}, divided by $r^4$, is
\[
\prod_{j=0}^3\frac{q-j}{r}\,
\frac{\frac{34627}{4000}\frac{q-13}{r}-18x}
     {624\frac{q-14}{r}},
\tag{67d}\label{eq:Bq-normalized}
\]
where $624=4\cdot13\cdot12$.  The relations \eqref{eq:theta}--\eqref{eq:q-factor-intervals} give an inclusion interval for every factor.  The ceiling in \eqref{eq:Bq} can only improve this lower bound.

For completeness, all interval operations are exact rational inclusion operations.  If $U=[u_-,u_+]$ and $V=[v_-,v_+]$, then
\[
U+V=[u_-+v_-,u_++v_+],
\]
$-U=[-u_+,-u_-]$, and $UV$ is the interval hull of the four endpoint products.  If $0<v_-$, then
\[
V^{-1}=[1/v_+,1/v_-],
\qquad U/V=UV^{-1}.
\]
Every division in a certificate is performed only after the verifier has checked that the denominator interval has positive lower endpoint. Thus the clique-cap inequalities, the normalized Brosch--Polak bound \eqref{eq:Bq-normalized}, and the final inequalities \eqref{eq:final5}--\eqref{eq:final37} are evaluated with rigorous inclusion throughout.

We now record exactly what the uniform verifier proves for a clique cap. Let $L$ denote either $L_5$ or $L_{37}$, let $P=P_\ell=\max_jP_j$, and let $\underline c_q$ be the interval lower bound for $C_q/r^4$ obtained from \eqref{eq:Bq-normalized}. The verifier also checks $P>0$ before this normalization. Since $P_0\le P$ and $\underline c_q\le C_q/r^4$, the strict inequality
\[
\frac{L}{Pr^4}+\left(1-\frac{P_0}{P}\right)\underline c_q>\frac1{64}
\tag{68}\label{eq:normalized-cap}
\]
is sufficient for Lemma~\ref{lem:clique-cap}. Indeed, \eqref{eq:normalized-cap} implies
\[
\frac{L}{Pr^4}+\left(1-\frac{P_0}{P}\right)\frac{C_q}{r^4}
>\frac{\ZZ(r)-1}{r^4},
\]
because $\ZZ(r)/r^4\le1/64$. Multiplying by $Pr^4$ gives
\[
L>P\bigl(\ZZ(r)-1-C_q\bigr)+P_0C_q.
\]
Thus the normalized test used by the verifier is stronger than the cap inequality it certifies.

A certificate box is a rational interval $I=[a,b]$ for $\alpha$, a finite sequence of clique caps, and a final sampled inequality.  Each expression is evaluated on
\[
\alpha\in I,
\qquad
x\in[0,1/1000].
\]
A cap records whether $L_5$ or $L_{37}$ is used and an index $\ell$ with $P=P_\ell$; the verifier checks that $P_k$ dominates every $P_j$ and accepts the cap only when the lower endpoint of the normalized margin in \eqref{eq:normalized-cap} is positive.  For the final target, directly from the definition of $\ZZ(r)$,
\[
\frac{\ZZ(r)}{r^4}
=\frac1{4r^4}\prod_{j=0}^3\floor{\frac{r-j}{2}}
\le\frac14\left(\frac12\right)^4
=\frac1{64}.
\tag{67e}\label{eq:Z64}
\]
The final inequality is accepted only when its lower endpoint is strictly larger than $1/64$.  It is therefore strictly larger than $\ZZ(r)/r^4$, and hence rules out the counterexample.

\begin{proposition}[Uniform middle verification]\label{prop:tail}
For every $r\ge1000$ and every integer $n$ satisfying \eqref{eq:middle-domain}, one has
\[
\crn(G)\ge\ZZ(r).
\]
\end{proposition}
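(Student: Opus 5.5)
The plan is to reduce Proposition~\ref{prop:tail} to a finite family of interval certificates in the normalized variables $(\alpha,x)$ of \eqref{eq:alpha-x}, following the pattern of Proposition~\ref{prop:finite-middle}. Since $r\ge1000$ gives $x\in(0,1/1000]$, every quantity in the clique-cap test and in the final sampled inequality becomes a rational function of $\alpha$, $x$, the cap parameters $\beta$, and the sample proportions $u/r$, $v/r$ (respectively $s/r$). These are evaluated by exact interval arithmetic on boxes $\alpha\in[a,b]$, $x\in[0,1/1000]$.

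\textbf{Step 1: partition and initial bound.} Cover the interval $[221/125,141/50]$ by finitely many rational boxes $I=[a,b]$. On each box, start from the initial bound $M_0^*(\alpha,x)$ of \eqref{eq:M0star}, which is a valid lower bound on $m/r^2$ for every $n\in rI$. Initially $w\le r-1$, so $\theta<1$. Whenever $D=3r-n=r(3-\alpha)\ge7$ and $D\le2w$, also use $\Phi(3-\alpha,1)-6x$ via \eqref{eq:uniform-terminal}. On the whole middle range $D\ge (3-141/50)r\ge 180$, so $D\ge7$ always holds. The condition $3-\alpha\le2\theta$ has to be checked with the enclosure $\theta\ge\beta-x_0$ from \eqref{eq:theta}.

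\textbf{Step 2: iterate clique caps.} For a proposed cap $q=\lceil\beta r\rceil$, choose $u=\lceil\mu r\rceil$ and $v=\lceil\nu r\rceil$ with fixed rationals $\mu,\nu$. Normalizing, $A$, $B$, $P_j$ and $E_0/r^2$ become products of factors enclosed by \eqref{eq:q-factor-intervals}. $L_5/r^4$ and $L_{37}/r^4$ are then $5E_0/r^2$ (or $\tfrac{37}{9}E_0/r^2$) times $r^2/r^4$-scaled terms, and the linear terms in $u+v$ are $O(x^3)$ after division by $r^4$. The lower bound $\underline c_q$ is taken from \eqref{eq:Bq-normalized}, which is valid since $q\ge15$ for $r\ge1000$ once $\beta\ge 0.015$. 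The remaining checks are:
\begin{itemize}[label={}]
\item the side conditions \eqref{eq:ABcond} on the whole box;
\item dominance of the recorded $P_\ell$;
\item positivity of the lower endpoint of the margin in \eqref{eq:normalized-cap}.
\end{itemize}
Lemma~\ref{lem:clique-cap} then gives $\omega(G)<q$. Replace the edge bound by $\max\{M_0^*,\Phi(3-\alpha,\beta)-6x\}$, which is legitimate by the monotonicity of $\Phi$ in $b$ proved just before \eqref{eq:uniform-terminal}. Then lower $\beta$ and repeat. Each cap raises the normalized edge density, because $\Phi$ decreases in $b$.

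\textbf{Step 3: final inequality.} With the final bound $M/r^2\ge\mu_*(\alpha,x)$, apply \eqref{eq:final5} or \eqref{eq:final37} with $s=\lceil\sigma r\rceil$ for a fixed rational $\sigma$, normalized by $r^4$. Check that its lower endpoint on the box exceeds $1/64$. By \eqref{eq:Z64} this exceeds $\ZZ(r)/r^4$, so $\crn(G)\ge\ZZ(r)$, which contradicts \eqref{eq:counter}.

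\textbf{Main obstacle.} The hard part is finding cap sequences and sample ratios that succeed near the ends of the range. Near $\alpha\approx 221/125$, the Gallai bound and $\Phi$ are weakest relative to the $K_r$ target. Near $\alpha\approx2.82$, the densities approach $\alpha/2$, so the sampled BK bounds barely exceed $1/64$. There, $x$-dependence and box width both eat into the margin. I would first run a floating-point search for $(\beta,\mu,\nu,\sigma)$ per box, bisecting boxes adaptively where the certified margin fails. The results would then be recorded as rational certificates, with correctness relying entirely on the exact interval verifier described above.
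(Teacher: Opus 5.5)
\textbf{The sample sizes are scaled wrongly, and this breaks every test.} Your architecture is the paper's: rational boxes chained from $221/125$ to $141/50$, the initial bounds $M_0^*$ and $\Phi(3-\alpha,\cdot)-6x$, caps certified through \eqref{eq:normalized-cap}, and a final margin above $1/64$ via \eqref{eq:Z64}. The problem is that you take $u=\lceil\mu r\rceil$, $v=\lceil\nu r\rceil$ and $s=\lceil\sigma r\rceil$ with fixed rationals $\mu,\nu,\sigma$.

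A B\"ungener--Kaufmann bound on a graph with $N$ vertices never exceeds $5\binom N2$. So a sample of $\Theta(r)$ vertices can certify only $O(r^2)$ crossings, while every target here is of order $r^4$. With $u,v\propto r$ the probability $P=\max_jP_j$ is of order one. As you note yourself, this gives $L/(Pr^4)=O(x^2)$. Letting $x\to0$ inside the box, the margin in \eqref{eq:normalized-cap} tends to at most $\frac{34627}{2496000}\beta^4-\frac1{64}\approx0.0139\beta^4-0.0156<0$ for every $\beta\le1$, so no cap ever verifies. Likewise, \eqref{eq:final5}--\eqref{eq:final37} with $s\propto r$ give only $\crn(G)\ge O(r^2)$, which is $O(x^2)$ after normalization and can never exceed $1/64$. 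This is not a weakness of interval arithmetic: for large $r$ the inequalities you propose to certify are false.

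\textbf{The missing idea is crossing-lemma amplification.} The integers $u$, $v$ and $s$ must be fixed constants, chosen per box and independent of $r$. Then an edge survives with probability $\Theta(r^{-2})$ and a crossing with probability $\Theta(r^{-4})$. Consequently $E_0=\Theta(1)$, and $P_jr^4=(u)_j(v)_{4-j}\big/\bigl(\prod_{i<j}\frac{t-i}{r}\prod_{i<4-j}\frac{q-i}{r}\bigr)$ is of order one. It is enclosed exactly by the factors in \eqref{eq:q-factor-intervals}, which is why the paper encloses only $(q-j)/r$ and $(n-q-j)/r$.

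Two further consequences follow. First, the term $\floor{\frac{203}{9}(u+v-2)}$ becomes a constant that enters $L/(Pr^4)$ at full order, not as $O(x^3)$. Second, the final test becomes $5\frac{M}{r^2}\frac{(\alpha-2x)(\alpha-3x)}{(s-2)(s-3)}-\floor{\frac{203}{9}(s-2)}\frac{\alpha(\alpha-x)(\alpha-2x)(\alpha-3x)}{s(s-1)(s-2)(s-3)}>\frac1{64}$, with the usual trade-off in $s$. With fixed $u,v$ the side conditions \eqref{eq:ABcond} read $v(t-1)\le(u-1)q\le2v(t-1)$, which ties $v/(u-1)$ to roughly $\beta/(\alpha-\beta)$ on each box. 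After this correction your plan coincides with the paper's, whose proof is the exact check of such a certificate (1052 boxes, 14148 cap transitions).
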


\begin{proof}
The certificate \texttt{cert/tail\_R1000.bin} consists of $1052$ consecutive rational boxes whose first endpoint is $221/125$ and whose last endpoint is $141/50$. The verifier checks exact endpoint equality between consecutive boxes, so there is no uncovered value of $\alpha$. It then checks all interval denominators, all clique-cap margins, and each final crossing margin in rational arithmetic. The certificate contains $14148$ successful cap transitions, with at most $18$ in a box. Inclusion interval arithmetic proves the inequalities simultaneously for every $\alpha$ and every $x\in[0,1/1000]$, hence in particular for every integer pair $(r,n)$ in the proposition.
\end{proof}

\section{Proof of the conjecture}\label{sec:final}

\begin{proof}[Proof of Theorem~\ref{thm:main}]
It suffices to prove the theorem for critical graphs. Indeed, suppose the theorem has been proved for every critical graph. Let $G$ be any graph with $\chi(G)\ge r$, put $s=\chi(G)$, and choose an $s$-critical subgraph $H\subseteq G$. Then
\[
\crn(G)\ge\crn(H)\ge\crn(K_s)\ge\crn(K_r).
\]
Here the middle inequality is the critical case, and the last follows from $K_r\subseteq K_s$. Hence it remains to consider an $r$-critical graph $G$.

The conjecture is known for $r\le18$ by Ackerman~\cite{Ackerman}. Suppose $r\ge19$ and that $G$ is a counterexample. By Lemma~\ref{lem:cranston}, either
\[
n<\frac{307}{250}r
\]
or
\[
\frac{221}{125}r\le n\le\frac{141}{50}r.
\]

In the first range, Proposition~\ref{prop:finite-near} settles $19\le r\le999$, while Proposition~\ref{prop:near-tail} settles $r\ge1000$. In the second range, Proposition~\ref{prop:finite-middle} settles $19\le r\le999$, while Proposition~\ref{prop:tail} settles $r\ge1000$. Thus no counterexample exists.
\end{proof}

\section*{Declaration on the use of generative AI}
ChatGPT (OpenAI) was used for literature-search, language-editing, and verification assistance. The author reviewed the resulting manuscript and takes responsibility for its mathematical claims, computations, references, and conclusions.


\begin{thebibliography}{99}

\bibitem{ACF}
M.~O. Albertson, D.~W. Cranston and J.~Fox,
\newblock Crossings, colorings, and cliques,
\newblock \emph{Electron. J. Combin.} 16 (2009), R45.

\bibitem{Ackerman}
E.~Ackerman,
\newblock On topological graphs with at most four crossings per edge,
\newblock \emph{Comput. Geom.} 85 (2019), 101574.

\bibitem{BaratToth}
J.~Bar\'at and G.~T\'oth,
\newblock Towards the Albertson conjecture,
\newblock \emph{Electron. J. Combin.} 17 (2010), R73.

\bibitem{BroschPolak}
D.~Brosch and S.~Polak,
\newblock New lower bounds on crossing numbers of $K_{m,n}$ from semidefinite programming,
\newblock \emph{Math. Program.} 207 (2024), 693--715.

\bibitem{BK}
A.~B\"ungener and M.~Kaufmann,
\newblock Improving the Crossing Lemma by characterizing dense 2-planar and 3-planar graphs,
\newblock \emph{J. Graph Algorithms Appl.} 29 (2026), 143--174.

\bibitem{CaoMehat}
S.~Cao and S.~S. Mehat,
\newblock Albertson's Conjecture for chromatic numbers at most $29$,
\newblock arXiv:2609.04771, 2026.

\bibitem{Cranston}
D.~W. Cranston,
\newblock Progress on Albertson's Conjecture,
\newblock arXiv:2512.08020, 2025.

\bibitem{FoxPachSuk}
J.~Fox, J.~Pach and A.~Suk,
\newblock Immersions and Albertson's conjecture,
\newblock in \emph{41st International Symposium on Computational Geometry (SoCG 2025)},
\newblock LIPIcs 332 (2025), Article 50.

\bibitem{Gallai}
T.~Gallai,
\newblock Kritische Graphen II,
\newblock \emph{Publ. Math. Inst. Hungar. Acad. Sci.} 8 (1963), 373--395.

\bibitem{Kleitman}
D.~J. Kleitman,
\newblock The crossing number of $K_{5,n}$,
\newblock \emph{J. Combin. Theory} 9 (1970), 315--323.

\bibitem{KostochkaStiebitzExcess}
A.~V. Kostochka and M.~Stiebitz,
\newblock Excess in colour-critical graphs,
\newblock in \emph{Graph Theory and Combinatorial Biology},
\newblock Bolyai Soc. Math. Stud. 7 (1999), 87--99.

\bibitem{KostochkaYancey}
A.~Kostochka and M.~Yancey,
\newblock Ore's conjecture on color-critical graphs is almost true,
\newblock \emph{J. Combin. Theory Ser. B} 109 (2014), 73--101.

\bibitem{LovaszPlummer}
L.~Lov\'asz and M.~D. Plummer,
\newblock \emph{Matching Theory},
\newblock North-Holland, Amsterdam, 1986.

\bibitem{LuizRichter}
A.~G. Luiz and R.~B. Richter,
\newblock Remarks on a conjecture of Bar\'at and T\'oth,
\newblock \emph{Electron. J. Combin.} 21 (2014), P1.57.

\bibitem{MPR}
D.~McQuillan, S.~Pan and R.~B. Richter,
\newblock On the crossing number of $K_{13}$,
\newblock \emph{J. Combin. Theory Ser. B} 115 (2015), 224--235.

\bibitem{Sadhu26}
A.~Sadhu,
\newblock Albertson's Conjecture holds for $r$ at most $26$,
\newblock arXiv:2609.01682v1, 2026.

\bibitem{Shannon}
C.~E. Shannon,
\newblock A theorem on coloring the lines of a network,
\newblock \emph{J. Math. Phys.} 28 (1949), 148--152.

\end{thebibliography}
\end{document}